\documentclass[11pt]{article}

\usepackage[T1]{fontenc}
\usepackage[utf8]{inputenc}
\usepackage[english]{babel}
\usepackage{amsmath,amssymb,amsfonts,amsthm,mathtools}
\usepackage[nopatch=true,expansion=false]{microtype}
\usepackage[margin=1.15in]{geometry}
\usepackage[colorlinks=true,linkcolor=blue,citecolor=blue,urlcolor=blue]{hyperref}
\usepackage{xcolor}
\usepackage{graphicx}
\DeclareMathOperator{\tr}{tr}
\DeclareMathOperator{\diver}{div}
\newcommand{\R}{\mathbb R}
\newcommand{\C}{\mathbb C}
\newcommand{\dd}{\,d}
\newcommand{\hOm}{\widehat\Omega}
\newcommand{\hG}{\widehat\Gamma}
\newcommand{\hu}{\widehat u}
\newcommand{\hv}{\widehat v}
\newcommand{\hp}{\widehat p}
\newcommand{\hf}{\widehat f}
\newcommand{\hq}{\widehat q}
\newcommand{\hx}{\widehat x}
\newcommand{\hy}{\widehat y}
\newcommand{\cD}{\mathcal D}
\newcommand{\cF}{\mathcal F}
\newcommand{\cP}{\mathcal P}
\newcommand{\cX}{\mathcal X}
\newcommand{\cY}{\mathcal Y}
\newcommand{\dual}[2]{\left\langle #1,#2\right\rangle}

\theoremstyle{plain}
\newtheorem{theorem}{Theorem}[section]
\newtheorem{lemma}[theorem]{Lemma}
\newtheorem{proposition}[theorem]{Proposition}
\newtheorem{corollary}[theorem]{Corollary}
\theoremstyle{definition}
\newtheorem{definition}[theorem]{Definition}
\theoremstyle{remark}
\newtheorem{remark}[theorem]{Remark}

\title{Analyticity of the data-to-solution map for a stationary Navier--Stokes fluid--structure interaction problem}

\author{Iva Miku\v{s}\thanks{University of Zagreb Faculty of Electrical Engineering and Computing. Email: \href{mailto:Iva.Mikus@fer.unizg.hr}{Iva.Mikus@fer.unizg.hr}.}
\and Boris Muha\thanks{University of Zagreb Faculty of Science, Department of Mathematics. Emails: \href{mailto:borism@math.hr}{borism@math.hr}, \href{mailto:boris.muha@math.pmf.unizg.hr}{boris.muha@math.pmf.unizg.hr}.}}

\date{}

\begin{document}
\maketitle

\begin{abstract}
We consider a stationary fluid--structure interaction problem in which the steady Navier--Stokes equations are coupled, through a free elastic interface, with a clamped Euler--Bernoulli beam equation. Using a complexification of the fixed-domain formulation and the holomorphic implicit function theorem, we prove that, in a neighbourhood of the trivial solution, the mapping from the right-hand side to the weak solution is real analytic. As a byproduct we obtain a small-data existence and local uniqueness result for the coupled system. Our motivation comes from data-driven reduced-order modelling for parametric PDEs, where approximation properties are closely related to the regularity of the solution map. Numerically, a manufactured-solution test exhibits approximately second-order convergence in the reported relative $L^2$ errors, while a proper orthogonal decomposition study for a parametric force family shows rapid decay of the empirical reconstruction error until a numerical floor is reached.
\end{abstract}

\section{Introduction}

The construction of efficient surrogate models for parametric partial differential equations is an important problem in computational science and engineering. Given a parameter $\mu$ in some admissible set, one seeks fast and accurate approximations of the solution map $\mu\mapsto u(\mu)$, with applications ranging from many-query simulation and uncertainty quantification to optimization and inverse problems. Among the various approaches, data-driven reduced-order models build low-dimensional approximations from a finite collection of high-fidelity solutions $\{u(\mu_i)\}_{i=1}^N$. Classical linear methods include proper orthogonal decomposition and reduced-basis methods, while nonlinear alternatives include deep-learning based reduced-order models; see \cite{c01a2bc6-f9f6-36a9-835f-6f8e5dd97c30,quarteroni2015reduced,Franco_2022, Solera-Rico2024} and the references therein.

The approximation properties of such methods depend on more than regularity alone, but regularity of the solution map is a basic ingredient in quantitative approximation theory. For finite-dimensional parameter sets, a quantitatively controlled holomorphic extension can yield exponential-type approximation rates, with the exponent depending on the parameter dimension; for countably parametric problems, rates additionally depend on sparsity or summability assumptions. Corresponding polynomial and neural-network approximation results are discussed in \cite{Cohen2011ANALYTICRA,doi:10.1142/S0219530518500203}. At the weaker level of finite smoothness, regularity also enters neural-network approximation estimates; see, for example, \cite{yarotsky2017errorboundsapproximationsdeep}. Thus analyticity is a useful structural property, but by itself it does not imply a dimension-independent exponential decay of Kolmogorov widths.

Our motivation is to apply this methodology to fluid--structure interaction (FSI) problems. In recent work \cite{BukacManojlovicMuhaVlah2024,MikusMuhaVlah2026}, deep-learning based reduced-order models were developed for parametric PDEs that include geometrically parametrized examples. The numerical experiments there suggest that related methodologies may be useful for FSI problems, where the fluid domain is determined by the unknown structure displacement. However, theoretical foundations for FSI reduced-order models are less developed than for problems posed on a fixed domain. A first step is to analyze the regularity of the data-to-solution map for a representative FSI problem, which is the goal of the present paper.

The question addressed here is not merely dependence of solutions on a prescribed geometry. For the steady Stokes and Navier--Stokes equations on parametrized families of domains, smooth and analytic dependence on the shape parameter has been studied extensively; see, for example, \cite{Sokoowski1992IntroductionTS,doi:10.1137/1.9780898719826,doi:10.1137/16M1099406}. In our setting the domain is itself an unknown of the coupled system, determined self-consistently by fluid--structure equilibrium. The parameter is the volume force acting on the fluid, and $(u,p,\eta)$ denote, respectively, the fluid velocity, the fluid pressure, and the beam displacement. We study the dependence of this coupled configuration on the force; this dependence contains both the geometric nonlinearity of the moving boundary and the nonlinear fluid--structure coupling.

We consider, as a benchmark problem, the steady incompressible Navier--Stokes equations on a planar channel whose top boundary is a linearly elastic beam. The beam is clamped at the endpoints and the fluid--structure coupling is realized through the fluid traction on the deformed interface. The geometry of the fluid domain is therefore an unknown of the problem, determined by the beam displacement. Our main result states that, in a neighbourhood of the trivial solution, the mapping that associates to each volume force the corresponding weak solution $(u,p,\eta)$ is real analytic. The proof is based on the holomorphic implicit function theorem applied on a fixed reference domain, after a pull-back that separates the geometric nonlinearity from the functional-analytic structure of the equations. As a byproduct we obtain a small-data existence and local uniqueness result for the coupled Navier--Stokes--beam system. The analysis must simultaneously handle the free-boundary character of the fluid domain and the dimensional mismatch between the bulk fluid and the lower-dimensional structure.

\subsection*{Related work}

Steady FSI with bulk elastic structures was studied in the foundational paper of Grandmont \cite{grandmont2002}, where existence of a regular solution is proved for small data by a fixed-point argument applied to the coupled system in a Lagrangian formulation. A small-data existence and uniqueness theory for flow past a deformable elastic body in an exterior domain was developed by Galdi and Kyed \cite{galdiKyed2009}. Our analysis fits in the same broad framework of a pull-back to a reference configuration, smallness assumptions, variational estimates, and an implicit-function argument, but the lower-dimensional structure and the function-valued parameter require a careful weak definition of the fluid traction on the elastic interface.

Stationary FSI problems with lower-dimensional elastic structures have been studied in a number of geometric configurations. An early example is the coupling of a two-dimensional inviscid flow with a nonlinear elastic ring in \cite{deCristoforisAntman1991}. Bayada, Chambat, Cid and V\'azquez \cite{bayadaEtAl2004} prove existence for a stationary coupling between an incompressible Stokes fluid and a one-dimensional elastic rod model representing the upper wall of a lubrication device. The fluid equation is Stokes rather than Navier--Stokes and the coupling involves only the pressure load, but the geometry of the problem and the role of the elastic displacement in determining the fluid domain are similar to ours. Related stationary tube-wall interaction with a three-dimensional elastic wall is studied in \cite{surulescu2007}. More recent bridge-deck and beam configurations in stationary Navier--Stokes flow are considered in \cite{gazzolaPatriarca2022,berchioEtAl2024,biancaBocchiGazzola2025}. The unsteady analogue of the problem we consider, namely incompressible Navier--Stokes coupled to a plate or shell, has been intensively studied in the past two decades; see, for example, \cite{chambolleEtAl2005,grandmont2008,MuhaCanic2013}.

A second body of work that is methodologically close to ours treats stationary FSI problems on a fixed reference domain after an ALE-type transformation, typically in the context of optimal control, sensitivity, or shape calculus. Lasiecka, Szulc and \.{Z}ochowski \cite{lasieckaEtAl2018} prove well-posedness for small stationary solutions of the incompressible Navier--Stokes equations coupled to linearized elasticity under mixed boundary conditions. Differentiability properties of stationary FSI solutions with respect to problem data are established in \cite{wickWollner2019,hintermuellerKroner2023}, and shape sensitivity is analyzed in \cite{scheidSokolowski2018,calistiLucardesiScheid2023}. These papers are concerned with bulk or three-dimensional linear elasticity rather than with a lower-dimensional beam model, and their results address differentiability or shape differentiability. The contribution established here is analyticity of the data-to-solution map with respect to an infinite-dimensional right-hand side.

The remainder of the paper is organized as follows. In Section~\ref{sec:formulation} we introduce the FSI problem, fix notation, give the weak formulation on the reference domain, and state the main result. In Section~\ref{sec:prelim} we recall the analytic facts used in the proof. In Section~\ref{sec:operator} we define the complexified nonlinear operator and prove its holomorphy. In Section~\ref{sec:linearization} we compute the linearized operator at the trivial solution and prove its invertibility. In Section~\ref{sec:proof-main} we apply the holomorphic implicit function theorem. Section~\ref{sec:numerics} is reserved for the numerical experiments.

\section{Problem formulation and main result}\label{sec:formulation}

\subsection{Geometry and the FSI system}

Let $L>0$ and let
\[
  \hOm=(0,L)\times(0,1)\subset\R^2
\]
be the reference fluid domain. Its boundary is decomposed as
\[
  \partial\hOm=\hG_{\rm top}\cup\hG_{\rm b},
  \qquad
  \hG_{\rm top}:=(0,L)\times\{1\},
  \qquad
  \hG_{\rm b}:=\partial\hOm\setminus\hG_{\rm top}.
\]
The part $\hG_{\rm top}$ is the elastic interface and $\hG_{\rm b}$ is the rigid part of the boundary.

Given a function $\eta:(0,L)\to\R$ with $\eta(0)=\eta(L)=0$ and $1+\eta>0$ on $(0,L)$, define
\begin{equation}\label{eq:Teta}
  T_\eta(\hx,\hy):=\bigl(\hx,(1+\eta(\hx))\hy\bigr),
\end{equation}
and the deformed fluid domain
\begin{equation}\label{eq:Omegaeta}
  \Omega(\eta):=T_\eta(\hOm)
  =\{(x,y)\in\R^2:0<x<L,\ 0<y<1+\eta(x)\}.
\end{equation}
The deformed top boundary $\Gamma_{\rm top}(\eta)$ is the graph of $1+\eta$. A direct computation gives
\begin{equation}\label{eq:FJ}
  F_\eta:=\nabla T_\eta
  =
  \begin{pmatrix}
    1&0\\
    \hy\eta'&1+\eta
  \end{pmatrix},
  \qquad
  J_\eta:=\det F_\eta=1+\eta,
\end{equation}
and
\begin{equation}\label{eq:Aeta}
  A_\eta:=F_\eta^{-1}
  =
  \begin{pmatrix}
    1&0\\[1mm]
    -\dfrac{\hy\eta'}{1+\eta}&\dfrac{1}{1+\eta}
  \end{pmatrix}.
\end{equation}
Whenever $\eta\in H^2_0(0,L)$ and $1+\eta\geq\kappa>0$, the map $T_\eta$ is a bi-Lipschitz $C^1$-diffeomorphism from $\hOm$ onto $\Omega(\eta)$.

The parameter of the problem is a volume force $\hf\in L^2(\hOm)^2$, prescribed on the reference domain. The corresponding force in the deformed configuration is
\[
  f_\eta:=\hf\circ T_\eta^{-1}:\Omega(\eta)\to\R^2.
\]
The coupled stationary FSI problem reads as follows: find the fluid velocity $u:\Omega(\eta)\to\R^2$, the pressure $p:\Omega(\eta)\to\R$, the beam displacement $\eta:(0,L)\to\R$, and a constant $c\in\R$ such that
\begin{equation}\label{eq:strong}
\begin{cases}
  -\diver \sigma(u,p)+(u\cdot\nabla)u=f_\eta,
     & \text{in }\Omega(\eta),\\
  \diver u=0,
     & \text{in }\Omega(\eta),\\
  u=0,
     & \text{on }\partial\Omega(\eta),\\
  \partial_x^4\eta=(-\sigma(u,p)n+cn)\cdot e_2\sqrt{1+(\eta')^2},
     & \text{on }(0,L),\\
  \eta(0)=\eta'(0)=\eta(L)=\eta'(L)=0,\\
  \displaystyle\int_0^L\eta(x)\dd x=0.
\end{cases}
\end{equation}
Here $e_2=(0,1)^T$, $n$ is the outer unit normal to $\Omega(\eta)$ on $\Gamma_{\rm top}(\eta)$, and
\begin{equation}\label{eq:cauchy-stress}
  \sigma(u,p):=2D(u)-pI,
  \qquad
  D(u):=\frac12\bigl(\nabla u+(\nabla u)^T\bigr),
\end{equation}
is the Cauchy stress tensor. With $n$ oriented outward from the fluid the density of force exerted by the fluid on the beam is $-\sigma(u,p)n$. The scalar $c$ is the Lagrange multiplier associated with the volume constraint. On the graph $y=1+\eta(x)$ one has
\[
  n\sqrt{1+(\eta')^2}=(-\eta',1)^T,
\]
and therefore the multiplier contributes $c\int_0^L\varphi\dd x$ to the weak beam equation.

\subsection{Weak formulation on the reference domain}

We pull the system back to the fixed reference domain. Given $u$ and $p$ on $\Omega(\eta)$, set
\[
  \hu:=u\circ T_\eta,
  \qquad
  \hp:=p\circ T_\eta.
\]
We use the convention $[\nabla u]_{ij}=\partial_j u_i$. The chain rule gives
\[
  (\nabla u)\circ T_\eta=\nabla\hu\,A_\eta.
\]
Accordingly, for a vector field $\hv$ on $\hOm$ we write
\begin{equation}\label{eq:transformed-operators}
  \nabla_\eta\hv:=\nabla\hv A_\eta,
  \qquad
  D_\eta(\hv):=\frac12\bigl(\nabla_\eta\hv+(\nabla_\eta\hv)^T\bigr),
  \qquad
  \diver_\eta\hv:=\tr(\nabla\hv A_\eta).
\end{equation}
Then
\[
  D(u)\circ T_\eta=D_\eta(\hu),
  \qquad
  (\diver u)\circ T_\eta=\diver_\eta\hu.
\]
We set
\[
  V:=H^2_0(0,L),
  \qquad
  Q:=L^2_0(\hOm):=\left\{q\in L^2(\hOm):\int_{\hOm}q\dd\hx=0\right\},
\]
and
\begin{equation}\label{eq:state-space}
  \cX:=H^1_0(\hOm)^2\times Q\times V\times\R.
\end{equation}
The pressure is normalized by the condition $\hp\in Q$ on the reference domain. This is a normalization convention; it differs from the direct pullback of the physical zero-mean condition only by an additive constant in the pressure. Such a change is compensated by the opposite change in $c$, since the pressure contribution to the interface load depends on the combination $c+\hp$.

To define the normal Cauchy traction with respect to the outer fluid normal, we first fix the trace space and a lifting. Let
\[
  H^{1/2}_{00}(0,L)
  :=
  \{\gamma z_{|_{\hG_{\rm top}}}:
  z\in H^1(\hOm),\ \gamma z_{|\hG_{\rm b}}=0\},
\]
endowed with the quotient trace norm, where $\gamma$ is trace operator. The subscript $00$ is the standard Lions--Magenes notation at the critical exponent $1/2$: it records the endpoint compatibility inherited from extensions that vanish on the adjacent sides. In the present rectangular geometry, a convenient bounded right inverse of the top trace is given explicitly as follows. If
\[
  \varphi(x)=\sum_{k=1}^\infty \varphi_k
  \sin\frac{k\pi x}{L}
\]
is the sine expansion of $\varphi\in H^{1/2}_{00}(0,L)$, set
\begin{equation}\label{eq:explicit-lifting}
  \mathcal E\varphi(x,y)
  :=
  \sum_{k=1}^\infty
  \varphi_k
  \frac{\sinh\left(\frac{k\pi}{L}y\right)}
       {\sinh\left(\frac{k\pi}{L}\right)}
  \sin\frac{k\pi x}{L},
  \qquad
  \cD_\varphi:=\mathcal E\varphi\, e_2 .
\end{equation}
Thus $\mathcal E\varphi$ is the harmonic extension of the top boundary datum $\varphi$ with zero trace on the bottom and vertical sides. In particular,
\[
  \cD_\varphi=\varphi e_2\quad\text{on }\hG_{\rm top},
  \qquad
  \cD_\varphi=0\quad\text{on }\hG_{\rm b},
\]
and the standard trace theorem, or directly the sine-series formula above, gives
\begin{equation}\label{eq:lifting-estimate}
  \|\cD_\varphi\|_{H^1(\hOm)}
  \leq C\|\varphi\|_{H^{1/2}_{00}(0,L)}.
\end{equation}
In particular, for $\varphi\in V=H^2_0(0,L)$,
\begin{equation}\label{eq:lifting-estimate-H2}
  \|\cD_\varphi\|_{H^1(\hOm)}
  \leq C\|\varphi\|_{H^2(0,L)}.
\end{equation}
Since the Cauchy stress is only in $L^2$ for weak solutions, its normal trace is interpreted through this lifting. For $\eta\in V$ with $1+\eta\geq\kappa>0$ define
\begin{equation}\label{eq:traction-functional}
\begin{aligned}
  \mathfrak T(\eta,\hf,\hu,\hp)(\varphi)
  &:={}
  \int_{\hOm}2D_\eta(\hu):D_\eta(\cD_\varphi)J_\eta\dd\hx
  \\
  &\quad+
  \int_{\hOm}\bigl((A_\eta\hu)\cdot\nabla\bigr)\hu\cdot\cD_\varphi J_\eta\dd\hx
  \\
  &\quad-
  \int_{\hOm}\hp\,\diver_\eta\cD_\varphi J_\eta\dd\hx
  -
  \int_{\hOm}\hf\cdot\cD_\varphi J_\eta\dd\hx.
\end{aligned}
\end{equation}
This functional is well-defined for $\hf\in L^2(\hOm)^2$, $\hu\in H^1_0(\hOm)^2$, $\hp\in Q$, and $\varphi\in V$, because $H^1(\hOm)\hookrightarrow L^4(\hOm)$ in two dimensions. The coupling between fluid and beam is realized via the normal trace of the Cauchy stress on the interface. For a weak solution the stress is only $L^2$, so it has no \emph{pointwise} normal trace. It does, however, possess a weak normal trace
in $H^{-1/2}$: for solutions the stress has square-integrable divergence, and
integrating the momentum equation by parts against a lifting of the boundary
datum gives a well-defined dual pairing. The functional $\mathfrak T$ in~\eqref{eq:traction-functional} realizes this weak trace, and crucially for the argument, its formula is
meaningful for every $\hat u\in H^1_0(\hat\Omega)^2$ and $\hat p\in L^2_0(\hat\Omega)$,
not only for solutions, which is what allows it to enter the map to which the
implicit function theorem is applied.

\begin{definition}\label{def:weak-solution}
Let $\hf\in L^2(\hOm)^2$. A quadruple $(\hu,\hp,\eta,c)\in\cX$ with $1+\eta\geq\kappa>0$ is a weak solution of \eqref{eq:strong} if, for every
\[
  (\hv,\hq,\varphi)\in H^1_0(\hOm)^2\times Q\times V,
\]
one has
\begin{equation}\label{eq:weak-fluid}
\begin{aligned}
  &\int_{\hOm}2D_\eta(\hu):D_\eta(\hv)J_\eta\dd\hx
  +\int_{\hOm}\bigl((A_\eta\hu)\cdot\nabla\bigr)\hu\cdot\hv J_\eta\dd\hx
  -\int_{\hOm}\hp\,\diver_\eta\hv J_\eta\dd\hx
  \\
  &\qquad=
  \int_{\hOm}\hf\cdot\hv J_\eta\dd\hx,
\end{aligned}
\end{equation}
\begin{equation}\label{eq:weak-div}
  \int_{\hOm}\diver_\eta\hu\,\hq J_\eta\dd\hx=0,
\end{equation}
\begin{equation}\label{eq:weak-beam}
  \int_0^L\eta''\varphi''\dd x
  =
  -\mathfrak T(\eta,\hf,\hu,\hp)(\varphi)
  +c\int_0^L\varphi\dd x,
\end{equation}
and
\begin{equation}\label{eq:weak-volume}
  \int_0^L\eta\dd x=0.
\end{equation}
\end{definition}
\begin{remark}
  It is enough to test \eqref{eq:weak-div} only with $\hq\in Q$, rather than with arbitrary functions in $L^2(\hOm)$. Indeed, by the divergence theorem and the fact that $\hu\in H^1_0(\hOm)^2$, we have
  \[
    \int_{\hOm} J_\eta\,\diver_\eta \hu\,\dd\hx = 0.
  \]
  Hence $J_\eta\,\diver_\eta\hu=0$ in $L^2_0(\hOm)$ implies $\diver_\eta\hu=0$ in $L^2(\hOm)$.
\end{remark}

\begin{remark}\label{rmk:stress-trace}
The formula \eqref{eq:traction-functional} is the reference-domain analogue of the normal trace of the Cauchy stress on the moving interface. If $(\hu,\hp)$ satisfies the fluid equations \eqref{eq:weak-fluid}--\eqref{eq:weak-div}, then the value of the stress functional is independent of the particular extension used to realize the boundary datum $\varphi e_2$ on $\hG_{\rm top}$. Indeed, the difference of two such extensions belongs to $H^1_0(\hOm)^2$, and testing \eqref{eq:weak-fluid} with this difference gives zero. Thus \eqref{eq:traction-functional} is a rigorous weak definition of
\[
  \int_0^L (\sigma(u,p)n)\cdot e_2\sqrt{1+(\eta')^2}\,\varphi\dd x.
\]
Accordingly, the force exerted by the fluid on the beam is represented by $-\mathfrak T(\eta,\hf,\hu,\hp)$.
This is the standard way of defining normal stresses for low-regularity fluid solutions; see, for instance, \cite[Chapter III]{galdi-book}.
\end{remark}

\subsection{Weak stress trace and regularity}\label{subsec:regularity}

We record explicitly the regularity of the normal stress functional. Although the beam equation in Definition~\ref{def:weak-solution} is tested only against functions in $V=H^2_0(0,L)$, the fluid stress acting on the interface is in fact an element of $H^{-1/2}$.

\begin{proposition}\label{prop:traction-Hminus-half}
Let $\eta\in V$ satisfy $1+\eta\geq\kappa>0$ and $\|\eta\|_{H^2(0,L)}\leq D_0$, let $\hf\in L^2(\hOm)^2$, $\hu\in H^1_0(\hOm)^2$, and $\hp\in L^2_0(\hOm)$. Then the map
\[
  \varphi\mapsto \mathfrak T(\eta,\hf,\hu,\hp)(\varphi)
\]
initially defined for $\varphi\in V$ extends uniquely to a bounded linear functional on $H^{1/2}_{00}(0,L)$. More precisely,
\begin{equation}\label{eq:traction-Hminus-half-estimate}
  |\mathfrak T(\eta,\hf,\hu,\hp)(\varphi)|
  \leq
  C_{D_0,\kappa}
  \bigl(
    \|\hu\|_{H^1(\hOm)}
    +\|\hu\|_{H^1(\hOm)}^2
    +\|\hp\|_{L^2(\hOm)}
    +\|\hf\|_{L^2(\hOm)}
  \bigr)
  \|\varphi\|_{H^{1/2}_{00}(0,L)} .
\end{equation}
Consequently, whenever the fluid equations hold, the normal Cauchy stress on the interface belongs to
\[
  (H^{1/2}_{00}(0,L))',
\]
which we henceforth denote by $H^{-1/2}(0,L):=(H^{1/2}_{00}(0,L))'$.
\end{proposition}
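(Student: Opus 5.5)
The plan is to bound each of the four integrals in \eqref{eq:traction-functional} separately, using only the lifting estimate \eqref{eq:lifting-estimate}. The key observation is that every term is linear in $\cD_\varphi$ and involves at most first derivatives of $\cD_\varphi$. Hence each term is controlled by $\|\cD_\varphi\|_{H^1(\hOm)}$ times a norm of the data, and \eqref{eq:lifting-estimate} converts this into $\|\varphi\|_{H^{1/2}_{00}(0,L)}$. Note that $\mathcal E$ is defined by the sine series for every $\varphi\in H^{1/2}_{00}(0,L)$, not only for $\varphi\in V$. So the right-hand side of \eqref{eq:traction-functional} makes sense directly on $H^{1/2}_{00}$, and it is linear in $\varphi$ because $\varphi\mapsto\cD_\varphi$ is linear. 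Density of $V$ in $H^{1/2}_{00}(0,L)$ (finite sine sums lie in $V$ after a standard argument, or one simply uses smooth compactly supported functions) then gives uniqueness of the extension once boundedness is established.

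First I will record the coefficient bounds. Since $\eta\in H^2_0(0,L)\hookrightarrow C^1([0,L])$, with $\|\eta\|_{C^1}\le C\|\eta\|_{H^2}\le CD_0$, and since $1+\eta\ge\kappa$ and $0\le\hy\le1$, formulas \eqref{eq:FJ}--\eqref{eq:Aeta} give
\[
\|J_\eta\|_{L^\infty}\le 1+CD_0,
\qquad
\|A_\eta\|_{L^\infty}\le C(1+D_0)\kappa^{-1}.
\]
From these, $|D_\eta(\hv)|\le C_{D_0,\kappa}|\nabla\hv|$ and $|\diver_\eta\hv|\le C_{D_0,\kappa}|\nabla\hv|$ pointwise.

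With these bounds the individual terms are handled as follows.
\begin{itemize}
\item The viscous term is bounded by $C\|\nabla\hu\|_{L^2}\|\nabla\cD_\varphi\|_{L^2}$, by Cauchy--Schwarz.
\item The pressure term is bounded by $C\|\hp\|_{L^2}\|\nabla\cD_\varphi\|_{L^2}$.
\item The force term is bounded by $C\|\hf\|_{L^2}\|\cD_\varphi\|_{L^2}$.
\item The convective term is handled by H\"older with exponents $(4,2,4)$:
\[
\Bigl|\int_{\hOm}\bigl((A_\eta\hu)\cdot\nabla\bigr)\hu\cdot\cD_\varphi J_\eta\dd\hx\Bigr|
\le C\|\hu\|_{L^4}\|\nabla\hu\|_{L^2}\|\cD_\varphi\|_{L^4}.
\]
The two-dimensional embedding $H^1(\hOm)\hookrightarrow L^4(\hOm)$ then bounds this by $C\|\hu\|_{H^1}^2\|\cD_\varphi\|_{H^1}$.
\end{itemize}
Summing the four bounds and applying \eqref{eq:lifting-estimate} yields \eqref{eq:traction-Hminus-half-estimate}.

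The only genuinely delicate point is the consequence, namely the identification of the extended functional with the normal Cauchy stress. This uses Remark~\ref{rmk:stress-trace}. When the fluid equations hold, $\mathfrak T(\eta,\hf,\hu,\hp)(\varphi)$ does not depend on the choice of $H^1$ extension of $\varphi e_2$ vanishing on $\hG_{\rm b}$. Therefore the extended functional is the weak normal stress, and it lies in $(H^{1/2}_{00}(0,L))'$. I expect no real obstacle here. The main care points are confirming that the $H^{1/2}_{00}$ quotient norm is compatible with the explicit lifting $\mathcal E$ (the paper asserts this in \eqref{eq:lifting-estimate}), and keeping track of the $C^1$ bound on $\eta'$, which is where the assumption $\|\eta\|_{H^2}\le D_0$ enters.
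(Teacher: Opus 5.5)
Your proposal is correct and follows the paper's proof essentially step by step. The ingredients match: $L^\infty$ bounds on $A_\eta$ and $J_\eta$ in terms of $D_0$ and $\kappa$, Cauchy--Schwarz for the viscous, pressure and force terms, H\"older with exponents $(4,2,4)$ together with $H^1(\hOm)\hookrightarrow L^4(\hOm)$ for the convective term, then \eqref{eq:lifting-estimate}, and density of $V$ in $H^{1/2}_{00}(0,L)$ for uniqueness.

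One small slip: finite sine sums are generally not in $V=H^2_0(0,L)$, since e.g.\ $\frac{d}{dx}\sin\frac{k\pi x}{L}$ does not vanish at $x=0$. The density step should therefore use $C_c^\infty(0,L)\subset V$, which is dense in $H^{1/2}_{00}(0,L)$, as in your second option.
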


\begin{proof}
We estimate each term in \eqref{eq:traction-functional}. By the assumptions on $\eta$, the coefficients $A_\eta$ and $J_\eta$ are bounded in $L^\infty$, with bounds depending only on $D_0$ and $\kappa$. First,
\[
  \left|
  \int_{\hOm}2D_\eta(\hu):D_\eta(\cD_\varphi)J_\eta\dd\hx
  \right|
  \leq C_{D_0,\kappa}\|\hu\|_{H^1(\hOm)}\|\cD_\varphi\|_{H^1(\hOm)}.
\]
Similarly,
\[
  \left|
  \int_{\hOm}\hp\,\diver_\eta\cD_\varphi J_\eta\dd\hx
  \right|
  \leq C_{D_0,\kappa}\|\hp\|_{L^2(\hOm)}\|\cD_\varphi\|_{H^1(\hOm)},
\]
and
\[
  \left|
  \int_{\hOm}\hf\cdot\cD_\varphi J_\eta\dd\hx
  \right|
  \leq C_{D_0,\kappa}\|\hf\|_{L^2(\hOm)}\|\cD_\varphi\|_{H^1(\hOm)}.
\]
For the convective term we use the two-dimensional embedding $H^1(\hOm)\hookrightarrow L^4(\hOm)$:
\[
\begin{aligned}
  \left|
  \int_{\hOm}\bigl((A_\eta\hu)\cdot\nabla\bigr)\hu\cdot\cD_\varphi J_\eta\dd\hx
  \right|
  &\leq C_{D_0,\kappa}\|\hu\|_{L^4(\hOm)}\|\nabla\hu\|_{L^2(\hOm)}\|\cD_\varphi\|_{L^4(\hOm)}  \\
  &\leq C_{D_0,\kappa}\|\hu\|_{H^1(\hOm)}^2\|\cD_\varphi\|_{H^1(\hOm)}.
\end{aligned}
\]
Combining these estimates with \eqref{eq:lifting-estimate} proves \eqref{eq:traction-Hminus-half-estimate}. Since $V$ is dense in $H^{1/2}_{00}(0,L)$, the extension is unique.
\end{proof}

The preceding proposition gives additional regularity for the structure. Indeed, for a weak solution the beam equation can be written distributionally as
\[
  \eta'''' = -\mathfrak T(\eta,\hf,\hu,\hp)+c
  \quad\text{in }(0,L),
\]
with clamped boundary conditions. Since $c$ defines an $L^2(0,L)$, hence $H^{-1/2}(0,L)$, functional, the right-hand side belongs to $H^{-1/2}(0,L)$. Standard one-dimensional elliptic regularity for the clamped biharmonic operator therefore yields
\begin{equation}\label{eq:beam-H72}
  \eta\in H^{7/2}(0,L),
  \qquad
  \|\eta\|_{H^{7/2}(0,L)}
  \leq
  C\left(
    \|\mathfrak T(\eta,\hf,\hu,\hp)\|_{H^{-1/2}(0,L)}+|c|
  \right).
\end{equation}
Thus the structural displacement is substantially smoother than what is needed to set up the weak formulation.

We also indicate what can be inferred for the fluid variables. The proof of Theorem~\ref{thm:main} is carried out entirely in the natural weak space $H^1_0(\hOm)^2\times L^2_0(\hOm)$. The deformed domain has corner points where the elastic graph meets the fixed vertical sides. We do not assume or verify the corner-regularity and compatibility hypotheses that would be needed for a global $H^2\times H^1$ estimate. Consequently, the solution constructed below is asserted only to be a weak fluid solution; no global strong regularity for $(u,p)$ is claimed in the present geometry. Standard interior and boundary regularity away from the corner points nevertheless gives higher local regularity there.

In variants without corner obstructions, for example with periodicity in the horizontal direction or on a smooth reference boundary, the usual Stokes and stationary Navier--Stokes elliptic regularity theory can be invoked; see, for instance, \cite[Chapters IV and IX]{galdi-book}. Under the corresponding hypotheses, $\hf\in L^2$ and sufficiently small data yield
\[
  u\in H^2(\Omega(\eta))^2,
  \qquad
  p\in H^1(\Omega(\eta)),
\]
and smoother data yield higher Sobolev regularity. These statements concern such modified geometries and are not used in the proof of the present theorem.
\subsection{Main result}

For $D>0$, define
\begin{equation}\label{eq:VD}
  V_D:=\{\eta\in V:\|\eta\|_{H^2(0,L)}<D\}.
\end{equation}
We choose $D$ small enough so that
\begin{equation}\label{eq:D-small}
  \|\eta\|_{L^\infty(0,L)}<\frac12
  \qquad\text{for every }\eta\in V_D.
\end{equation}
This is possible by the embedding $H^2(0,L)\hookrightarrow C^1([0,L])$. In particular, $1+\eta\geq1/2$ for all $\eta\in V_D$.

\begin{theorem}\label{thm:main}
There exist $\rho>0$ and an open neighbourhood $\mathcal U$ of $(0,0,0,0)$ in $\cX$ such that, for every $\hf\in L^2(\hOm)^2$ with
\[
  \|\hf\|_{L^2(\hOm)}<\rho,
\]
the weak problem \eqref{eq:weak-fluid}--\eqref{eq:weak-volume} has a unique solution
\[
  (\hu(\hf),\hp(\hf),\eta(\hf),c(\hf))\in\mathcal U.
\]
Moreover, the data-to-solution map
\[
  \hf\mapsto (\hu(\hf),\hp(\hf),\eta(\hf),c(\hf))
\]
is real analytic from a neighbourhood of $0$ in $L^2(\hOm)^2$ into $\cX$. For each such force, the corresponding beam displacement satisfies the additional regularity
\[
  \eta(\hf)\in H^{7/2}(0,L).
\]
\end{theorem}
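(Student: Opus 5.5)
We will recast the weak problem as $\cF(\hf,X)=0$ for $X=(\hu,\hp,\eta,c)\in\cX$ and apply the holomorphic implicit function theorem at $(0,0)$. Concretely, we complexify all spaces: $\cX_\C$, $L^2(\hOm;\C)^2$, and $V_\C$. We set $\cY:=H^{-1}(\hOm)^2_\C\times Q_\C'\times V_\C'\times\C$ and define $\cF:L^2(\hOm;\C)^2\times\bigl(H^1_0(\hOm;\C)^2\times Q_\C\times V_{D,\C}\times\C\bigr)\to\cY$ componentwise. The components are the left-minus-right sides of \eqref{eq:weak-fluid}, \eqref{eq:weak-div}, \eqref{eq:weak-beam} and \eqref{eq:weak-volume}, extended as bilinear (not sesquilinear) forms in the test functions. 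Here $V_{D,\C}$ is a complex ball on which $\|\eta\|_{L^\infty}<\tfrac12$, so that $|1+\eta|\ge\tfrac12$ and $1/(1+\eta)$ is defined. Real solutions of $\cF=0$ with $\hf$ real are exactly the weak solutions of Definition~\ref{def:weak-solution}.

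\textbf{Holomorphy.} Every entry of $A_\eta$ and $J_\eta$ is built from $\eta,\eta'$ and $1/(1+\eta)$. The maps $\eta\mapsto\eta'\in H^1\subset C^0$ and $\eta\mapsto(1+\eta)^{-1}$ are holomorphic from $V_{D,\C}$ into $L^\infty(0,L)$, the latter via the Neumann series in the Banach algebra $C([0,L])$. The factor $\hy$ is a fixed bounded multiplier. Each term of $\cF$ is then a finite sum of bounded multilinear maps, such as $(\hu,\hv)\mapsto\int M\nabla\hu:\nabla\hv$ and the trilinear convective term (using $H^1\hookrightarrow L^4$ as in Proposition~\ref{prop:traction-Hminus-half}). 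The coefficients of these maps are holomorphic $L^\infty$-valued functions of $\eta$, and $\hf$ enters linearly. Compositions of holomorphic maps with bounded multilinear maps are holomorphic, so $\cF$ is holomorphic. The traction term needs the estimate of Proposition~\ref{prop:traction-Hminus-half} uniformly in complex $\eta$; this follows verbatim because only $L^\infty$ bounds on $A_\eta,J_\eta$ are used. For the beam component we view $\varphi\mapsto\mathfrak T(\varphi)$ as an element of $V'$ using the $H^2$ lifting bound \eqref{eq:lifting-estimate-H2}.

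\textbf{Linearization and its invertibility.} This is the step I expect to be the main obstacle. At $(\hf,X)=(0,0)$ the convective term is quadratic and all $\eta$-dependence in the fluid equations multiplies $\hu$ or $\hp$, so the $\eta$-derivatives of the fluid components vanish. The derivative $\partial_X\cF(0,0)[\delta\hu,\delta\hp,\delta\eta,\delta c]$ is therefore the following block-triangular system:
\begin{itemize}
\item the Stokes system $\int 2D(\delta\hu):D(\hv)-\int\delta\hp\diver\hv$ and $\int\diver\delta\hu\,\hq$ on $\hOm$;
\item the beam equation $\int\delta\eta''\varphi''+\mathfrak T_0(\delta\hu,\delta\hp)(\varphi)-\delta c\int\varphi$, where $\mathfrak T_0$ is $\mathfrak T$ with $\eta=0$, $\hf=0$ and without the convective term;
\item the volume constraint $\int\delta\eta$.
\end{itemize}
Given data $(g_1,g_2,g_3,g_4)\in\cY$, we first solve the Stokes problem uniquely in $H^1_0\times L^2_0$ by Babuška–Brezzi: Korn's inequality gives coercivity, and the inf-sup condition holds on the Lipschitz rectangle. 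This determines $(\delta\hu,\delta\hp)$, so the traction term becomes a known element of $V'$. It then remains to solve the saddle-point problem $\int\delta\eta''\varphi''-\delta c\int\varphi=\tilde g_3(\varphi)$, $\int\delta\eta=g_4$. Its form is coercive on $V$ (Poincaré), and the constraint $\varphi\mapsto\int\varphi$ is surjective onto $\C$, for instance via a fixed bump $\varphi_0\in V$ with $\int\varphi_0=1$. Hence it is uniquely solvable with bounds. The linearization is thus a bounded bijection, and an isomorphism by the open mapping theorem.

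\textbf{Conclusion.} The holomorphic implicit function theorem yields neighbourhoods and a holomorphic map $\hf\mapsto X(\hf)$ with $\cF(\hf,X(\hf))=0$, the unique zero in $\mathcal U$. For real $\hf$, complex conjugation commutes with $\cF$, because all coefficients are real and the forms are bilinear. Therefore $\overline{X(\hf)}$ is also a zero in $\mathcal U$ (after taking $\mathcal U$ conjugation-symmetric), and uniqueness forces $X(\hf)$ to be real. The restriction of a holomorphic map to the real subspace is real analytic, which gives $\rho$, $\mathcal U$ and existence and uniqueness. Shrinking $D$ keeps $1+\eta\ge\tfrac12$. Finally, $\eta(\hf)\in H^{7/2}(0,L)$ follows from Proposition~\ref{prop:traction-Hminus-half} together with the regularity estimate \eqref{eq:beam-H72}.
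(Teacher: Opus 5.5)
Your proposal is correct and follows the paper's proof essentially step for step. The shared steps are: the complexified bilinear formulation on $\hOm$, holomorphic $L^\infty$-valued coefficients composed with bounded multilinear forms, and the block-triangular linearization (Stokes first, then the constrained clamped beam driven by $\mathfrak T_0$). They continue with the holomorphic implicit function theorem, a conjugation-symmetry argument for real-valuedness, and $H^{7/2}$ regularity from Proposition~\ref{prop:traction-Hminus-half} and \eqref{eq:beam-H72}.

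The differences are cosmetic. You treat the volume constraint by Brezzi theory with a bump function instead of the explicit $\psi_0=x^2(L-x)^2/24$. You also prove invertibility directly on the complex spaces. Since your forms are bilinear rather than sesquilinear, the Korn and Poincar\'e coercivity must there be applied with the conjugated test function (e.g.\ $\int_{\hOm}2D(w):D(\bar w)\dd\hx=2\|D(w)\|_{L^2}^2$), or replaced by splitting into real and imaginary parts, which is what Lemma~\ref{lem:complexification-isomorphism} does in the paper.
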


\begin{remark}
The case $\hf=0$ admits the trivial solution $(\hu,\hp,\eta,c)=(0,0,0,0)$, and Theorem~\ref{thm:main} states that this solution persists, with analytic dependence, under sufficiently small perturbations of the right-hand side. In particular, the theorem gives a small-data local existence and uniqueness result for the coupled problem.
\end{remark}
The proof of Theorem~\ref{thm:main} proceeds as follows. A change of variables
to the fixed reference domain $\hOm$ removes the free boundary and confines the
geometric nonlinearity to explicit coefficients depending on $\eta$. We then
complexify the resulting weak formulation and encode it as a single nonlinear
map $F$ whose zeros are the weak solutions; real analyticity of the
data-to-solution map follows once $F$ is shown to be holomorphic. Holomorphy
reduces to that of the coefficient maps $\eta\mapsto J_\eta,\,A_\eta$ and of the
weak traction functional, which remain holomorphic under complexification even
though complex displacements carry no geometric meaning. Finally, at the trivial
state the linearization $D_X F(0)$ splits into a Stokes saddle-point problem and
a clamped biharmonic problem, coupled through the linearized interface traction;
its invertibility, together with holomorphy, places the problem within the assumptions
of the holomorphic implicit function theorem, which yields both the existence of local
solution and its analytic dependence on the force.
\section{Analytic preliminaries}\label{sec:prelim}

We use standard notions of holomorphy between complex Banach spaces. A map between open subsets of complex Banach spaces is holomorphic if it is locally bounded and complex G\^ateaux differentiable, equivalently if it is Fr\'echet holomorphic. A map between real Banach spaces is real analytic if it is the restriction of a holomorphic map between their complexifications.

We use the holomorphic corollary of the Banach-space implicit
function theorem; see \cite[Section~10.2, Theorem~10.2.1,
p.~270]{dieudonne1969}. The same form is used for holomorphic
parameter-to-solution maps in
\cite[Section~2.3, proof of Theorem~2.5]{cohenDeVore2015}.

\begin{theorem}[Holomorphic implicit function theorem]\label{thm:HIFT}
Let $X,Y,Z$ be complex Banach spaces, let $U\subset X\times Y$ be open, and let $F:U\to Z$ be holomorphic. Suppose that $F(x_0,y_0)=0$ and that
\[
  D_yF(x_0,y_0):Y\to Z
\]
is a bounded complex-linear isomorphism. Then there exist neighbourhoods $U_X$ of $x_0$ and $U_Y$ of $y_0$ and a unique holomorphic map $\Phi:U_X\to U_Y$ such that
\[
  F(x,\Phi(x))=0
\]
for all $x\in U_X$. Moreover, these are the only zeros of $F$ in $U_X\times U_Y$.
\end{theorem}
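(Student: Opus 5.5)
We would derive Theorem~\ref{thm:HIFT} from the contraction-mapping proof of the Banach-space implicit function theorem. Holomorphy of the implicit map then follows either from the fact that uniform limits of holomorphic maps are holomorphic, or from complex linearity of its derivative. Throughout we use that a holomorphic map between open subsets of complex Banach spaces is continuously Fréchet differentiable, with complex-linear derivatives. This is standard (Cauchy integral formula on complex lines plus local boundedness).

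Set $L:=D_yF(x_0,y_0)$, which by assumption has a bounded complex-linear inverse, and define
\[
  G(x,y):=y-L^{-1}F(x,y).
\]
$G$ is holomorphic on $U$, since it is a composition of $F$ with a bounded complex-linear map plus a linear term. It satisfies $G(x_0,y_0)=y_0$ and $D_yG(x_0,y_0)=0$. By continuity of $D_yF$ we choose $r,\delta>0$ so that the following hold:
\begin{itemize}
\item the product of closed balls $\bar B(x_0,\delta)\times\bar B(y_0,r)$ lies in $U$;
\item $\|D_yG(x,y)\|\le\frac12$ on this set;
\item $\|G(x,y_0)-y_0\|\le r/2$ for $\|x-x_0\|\le\delta$.
\end{itemize}
The mean value inequality then shows that, for each fixed $x\in U_X:=B(x_0,\delta)$, the map $y\mapsto G(x,y)$ is a $\frac12$-contraction of $\bar B(y_0,r)$ into itself. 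Banach's fixed point theorem gives a unique fixed point $\Phi(x)$ there. Since fixed points of $G(x,\cdot)$ are exactly the zeros of $F(x,\cdot)$, this yields both existence of $\Phi$ and the uniqueness clause, with $U_Y:=B(y_0,r)$ (shrinking $\delta$ if necessary so that $\Phi(x)\in U_Y$).

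For holomorphy we would use the Picard iterates $\Phi_0(x):=y_0$ and $\Phi_{n+1}(x):=G(x,\Phi_n(x))$. Each $\Phi_n$ is holomorphic on $U_X$, as a composition of holomorphic maps, and takes values in $\bar B(y_0,r)$. The contraction estimate gives
\[
  \|\Phi_n-\Phi\|_{\infty}\le 2^{-n}r,
\]
so $\Phi_n\to\Phi$ uniformly on $U_X$. A uniform limit of holomorphic Banach-valued maps is holomorphic. Indeed, local boundedness is immediate. Along every complex line $\zeta\mapsto x+\zeta h$ the restrictions converge uniformly, so the limit is holomorphic in $\zeta$ by the Cauchy integral formula (or by testing with functionals and using Morera/Weierstrass). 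Gâteaux holomorphy together with local boundedness is exactly the definition of holomorphy recalled above.

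The main point needing care is this last step: passing holomorphy to the limit for vector-valued maps, where we rely on the equivalence between Gâteaux holomorphy with local boundedness and Fréchet holomorphy. As a backup we would argue directly. By a Neumann series, $D_yF(x,\Phi(x))$ remains invertible for $x$ near $x_0$. Differentiating $F(x,\Phi(x))=0$, after first proving Lipschitz continuity of $\Phi$ from the contraction, gives
\[
  D\Phi(x)=-D_yF(x,\Phi(x))^{-1}D_xF(x,\Phi(x)),
\]
which is complex linear. Hence $\Phi$ is complex Fréchet differentiable, i.e.\ holomorphic.
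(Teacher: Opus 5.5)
Your argument is correct. It is not really in competition with the paper, which gives no proof of Theorem~\ref{thm:HIFT} and instead imports it from Dieudonn\'e (Section~10.2) and Cohen--DeVore.

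The standard proof of the cited result is your contraction scheme for $G(x,y)=y-L^{-1}F(x,y)$. Holomorphy is then read off from the complex-linear formula $D\Phi(x)=-D_yF(x,\Phi(x))^{-1}D_xF(x,\Phi(x))$, which is your backup argument.

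Your primary route, passing holomorphy to the uniform limit of the holomorphic Picard iterates, is a different and valid way to finish. It needs no Lipschitz estimate or Fr\'echet differentiability of $\Phi$. You only need G\^ateaux holomorphy of the limit along complex lines (the vector-valued Weierstrass theorem) together with local boundedness, which is exactly the definition of holomorphy adopted in Section~\ref{sec:prelim}. The derivative-formula route costs a little more work but also gives $D\Phi$ explicitly.

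One step you only assert deserves a line. Since $\Phi(x)-y_0=G(x,\Phi(x))-G(x,y_0)+G(x,y_0)-y_0$, the contraction gives $\|\Phi(x)-y_0\|\le 2\|G(x,y_0)-y_0\|$. So requiring the strict bound $\|G(x,y_0)-y_0\|<r/2$ on $U_X$ guarantees $\Phi(U_X)\subset B(y_0,r)=U_Y$. This open $U_Y$ is what is needed later for the open neighbourhood $\mathcal U$ in Theorem~\ref{thm:main}.
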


\begin{lemma}\label{lem:complexification-isomorphism}
Let $X,Y$ be real Banach spaces and let $A:X\to Y$ be a bounded linear isomorphism. Then the complexification
\[
  A_\C:X_\C\to Y_\C,
  \qquad
  A_\C(x_1+i x_2):=Ax_1+iAx_2,
\]
is a bounded complex-linear isomorphism.
\end{lemma}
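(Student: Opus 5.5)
The complexification $A_\C$ of a bounded isomorphism is again a bounded isomorphism; the plan is to verify linearity, boundedness, bijectivity and bounded inverse directly.

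\textbf{Norms.} Equip $X_\C=X\oplus iX$ with any of the standard equivalent norms, for instance
\[
  \|x_1+ix_2\|_{X_\C}:=\bigl(\|x_1\|_X^2+\|x_2\|_X^2\bigr)^{1/2},
\]
or the Taylor norm $\sup_{\theta}\|x_1\cos\theta-x_2\sin\theta\|_X$. These norms are all equivalent, and bounded operators and isomorphisms do not depend on the choice. With this choice $X_\C$ is a complex Banach space, since it is isomorphic to $X\times X$ as a real Banach space.

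\textbf{Complex linearity.} Additivity and real homogeneity of $A_\C$ are immediate. For multiplication by $i$ we compute
\[
  A_\C\bigl(i(x_1+ix_2)\bigr)=A_\C(-x_2+ix_1)=-Ax_2+iAx_1=i\,A_\C(x_1+ix_2).
\]

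\textbf{Boundedness.} With the product-type norm above,
\[
  \|A_\C(x_1+ix_2)\|_{Y_\C}^2=\|Ax_1\|_Y^2+\|Ax_2\|_Y^2\le\|A\|^2\bigl(\|x_1\|_X^2+\|x_2\|_X^2\bigr),
\]
so $\|A_\C\|\le\|A\|$.

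\textbf{Bijectivity and bounded inverse.} Since $A$ is an isomorphism, $A^{-1}:Y\to X$ is bounded (by assumption, or by the open mapping theorem). Define $B:=(A^{-1})_\C$. By the previous two steps, $B$ is complex-linear and bounded, with $\|B\|\le\|A^{-1}\|$. Componentwise we have $BA_\C=\mathrm{id}_{X_\C}$ and $A_\C B=\mathrm{id}_{Y_\C}$, because $A^{-1}A=\mathrm{id}_X$ and $AA^{-1}=\mathrm{id}_Y$. Hence $A_\C$ is a bounded complex-linear isomorphism with inverse $(A^{-1})_\C$.

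The only point needing any care is the choice and equivalence of the complexification norms. Boundedness with respect to one of them transfers to the others by equivalence, so there is no real obstacle.
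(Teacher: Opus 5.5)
Your proof is correct and follows the same route as the paper: the paper also inverts $A_\C$ componentwise via $x_j=A^{-1}y_j$ and gets boundedness of $A_\C^{-1}$ from that of $A^{-1}$, while you additionally check complex linearity and identify $A_\C^{-1}=(A^{-1})_\C$ explicitly. One harmless imprecision is that the Euclidean product norm on $X\oplus iX$ need not satisfy $\|e^{i\theta}z\|=\|z\|$, so it is only a real norm equivalent to a genuine complex norm such as Taylor's, which is all your argument needs.
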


\begin{proof}
For $y_1+i y_2\in Y_\C$, let $x_j=A^{-1}y_j$. Then $A_\C(x_1+i x_2)=y_1+i y_2$, which proves surjectivity. Injectivity is identical. Boundedness of $A_\C^{-1}$ follows from boundedness of $A^{-1}$.
\end{proof}

Finally, we record the coefficient regularity used throughout. Since $H^2(0,L)\hookrightarrow W^{1,\infty}(0,L)$ and $H^2(0,L)$ is a Banach algebra, after decreasing $D$ if necessary the map
\[
  \eta\mapsto (1+\eta)^{-1}
\]
is holomorphic from the complex ball
\[
  V_{D,\C}:=\{\eta\in H^2_0(0,L;\C):\|\eta\|_{H^2(0,L)}<D\}
\]
into $W^{1,\infty}(0,L;\C)$. Hence the coefficient maps
\begin{equation}\label{eq:coeff-holomorphic}
  \eta\mapsto J_\eta,
  \qquad
  \eta\mapsto A_\eta,
  \qquad
  \eta\mapsto D_\eta,
\end{equation}
understood as multiplier-valued maps, are holomorphic on $V_{D,\C}$. The same holds for all products of these coefficients appearing below.

\section{The complexified nonlinear operator}\label{sec:operator}

Set
\[
  \cP_\C:=L^2(\hOm;\C)^2,
\]
\[
  \cX_\C:=H^1_0(\hOm;\C)^2\times L^2_0(\hOm;\C)\times H^2_0(0,L;\C)\times\C,
\]
and
\[
  \cY_\C:=H^{-1}(\hOm;\C)^2\times (L^2_0(\hOm;\C))'\times H^{-2}(0,L;\C)\times\C.
\]

All dualities are complex-bilinear; no complex conjugates are used. For complex-valued $\eta\in V_{D,\C}$ we keep the algebraic definitions \eqref{eq:FJ}--\eqref{eq:Aeta}. They need not have a geometric interpretation, but they define holomorphic coefficients.

The explicit lifting is extended complex-linearly by the same harmonic formula as in \eqref{eq:explicit-lifting}: if
\[
  \varphi(x)=\sum_{k=1}^\infty \varphi_k
  \sin\frac{k\pi x}{L},
\]
then
\begin{equation}\label{eq:complex-lifting}
  \cD_\varphi(x,y)
  =
  \left(
  \sum_{k=1}^\infty
  \varphi_k
  \frac{\sinh\left(\frac{k\pi}{L}y\right)}
       {\sinh\left(\frac{k\pi}{L}\right)}
  \sin\frac{k\pi x}{L}
  \right)e_2,
  \qquad
  \varphi\in H^2_0(0,L;\C).
\end{equation}
Define
\[
  \cF_\C:\cP_\C\times
  \bigl(H^1_0(\hOm;\C)^2\times L^2_0(\hOm;\C)\times V_{D,\C}\times\C\bigr)
  \to\cY_\C
\]
by
\[
  \cF_\C=(\cF_{\C,1},\cF_{\C,2},\cF_{\C,3},\cF_{\C,4}).
\]
For test functions $\hv\in H^1_0(\hOm;\C)^2$, $\hq\in L^2_0(\hOm;\C)$, and $\varphi\in H^2_0(0,L;\C)$, set
\begin{equation}\label{eq:FC1}
\begin{aligned}
  \dual{\cF_{\C,1}(\hf,\hu,\hp,\eta,c)}{\hv}
  &:={}
  \int_{\hOm}2D_\eta(\hu):D_\eta(\hv)J_\eta\dd\hx
  +\int_{\hOm}\bigl((A_\eta\hu)\cdot\nabla\bigr)\hu\cdot\hv J_\eta\dd\hx
  \\
  &\quad-
  \int_{\hOm}\hp\,\diver_\eta\hv J_\eta\dd\hx
  -
  \int_{\hOm}\hf\cdot\hv J_\eta\dd\hx,
\end{aligned}
\end{equation}
\begin{equation}\label{eq:FC2}
  \dual{\cF_{\C,2}(\hf,\hu,\hp,\eta,c)}{\hq}
  :=
  \int_{\hOm}\diver_\eta\hu\,\hq J_\eta\dd\hx,
\end{equation}
\begin{equation}\label{eq:FC3}
\begin{aligned}
  \dual{\cF_{\C,3}(\hf,\hu,\hp,\eta,c)}{\varphi}
  &:={}
  \int_0^L\eta''\varphi''\dd x
  +\mathfrak T_\C(\eta,\hf,\hu,\hp)(\varphi)
  -c\int_0^L\varphi\dd x,
\end{aligned}
\end{equation}
and
\begin{equation}\label{eq:FC4}
  \cF_{\C,4}(\hf,\hu,\hp,\eta,c):=\int_0^L\eta\dd x.
\end{equation}
Here
\begin{equation}\label{eq:complex-traction}
\begin{aligned}
  \mathfrak T_\C(\eta,\hf,\hu,\hp)(\varphi)
  &:={}
  \int_{\hOm}2D_\eta(\hu):D_\eta(\cD_\varphi)J_\eta\dd\hx
  \\
  &\quad+
  \int_{\hOm}\bigl((A_\eta\hu)\cdot\nabla\bigr)\hu\cdot\cD_\varphi J_\eta\dd\hx
  \\
  &\quad-
  \int_{\hOm}\hp\,\diver_\eta\cD_\varphi J_\eta\dd\hx
  -
  \int_{\hOm}\hf\cdot\cD_\varphi J_\eta\dd\hx.
\end{aligned}
\end{equation}

\begin{proposition}\label{prop:F-holomorphic}
The map $\cF_\C$ is holomorphic in a neighbourhood of the origin in $\cP_\C\times\cX_\C$.
\end{proposition}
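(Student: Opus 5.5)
The argument is a structural one: decompose $\cF_\C$ into finitely many terms, each a bounded multilinear form in the unknowns composed with holomorphic coefficient maps, and use that holomorphy is preserved under these operations. The basic fact is that a bounded $k$-linear map between complex Banach spaces is holomorphic, being a polynomial. Compositions, products and sums of holomorphic maps are again holomorphic. We work on the neighbourhood $\cP_\C\times H^1_0(\hOm;\C)^2\times L^2_0(\hOm;\C)\times V_{D,\C}\times\C$, with $D$ as in \eqref{eq:coeff-holomorphic}. There $1+\eta$ is bounded away from zero in $W^{1,\infty}(0,L;\C)$, since $\|\eta\|_{L^\infty}<1/2$ for complex $\eta$ as well, by the same embedding.

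First we handle the coefficients. By \eqref{eq:coeff-holomorphic}, the maps $\eta\mapsto J_\eta$ and $\eta\mapsto A_\eta$ are holomorphic from $V_{D,\C}$ into $L^\infty(\hOm;\C)$ and $L^\infty(\hOm;\C)^{2\times2}$ respectively. This uses $\hy\in[0,1]$ and the bounds $\eta'\in L^\infty$ and $(1+\eta)^{-1}\in W^{1,\infty}$. Products such as $A_\eta^T A_\eta J_\eta$, $A_\eta J_\eta$ and $J_\eta$ are then holomorphic $L^\infty$-valued maps, because pointwise multiplication on $L^\infty$ is a bounded bilinear map.

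Next we rewrite each integral in $\cF_{\C,1}$--$\cF_{\C,4}$ and in $\mathfrak T_\C$ in the form $B(M(\eta),w_1,\dots,w_k)$. Here $M(\eta)$ is one of these $L^\infty$ coefficient products, and $B$ is a bounded multilinear form. For example,
\[
\int_{\hOm}2D_\eta(\hu):D_\eta(\hv)J_\eta\dd\hx
\]
is a finite sum of terms $\int_{\hOm} m(\eta)\,\partial_j\hu_i\,\partial_l\hv_k\dd\hx$ with $m(\eta)\in L^\infty$. Each such term is bounded by $\|m\|_{L^\infty}\|\hu\|_{H^1}\|\hv\|_{H^1}$. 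The convective term is trilinear in $(m(\eta),\hu,\hu)$ against $\hv$, estimated with $H^1\hookrightarrow L^4$ exactly as in the proof of Proposition~\ref{prop:traction-Hminus-half}. The pressure and force terms are handled similarly. Taking the supremum over unit test functions shows that each component of $\cF_\C$ is a bounded multilinear map into the appropriate dual space of $\cY_\C$, composed with holomorphic maps of the arguments. For instance, $(\hf,\hu,\hp,\eta)\mapsto\cF_{\C,1}\in H^{-1}(\hOm;\C)^2$ is of this form. The terms $\int_0^L\eta''\varphi''$, $c\int_0^L\varphi$ and $\int_0^L\eta$ are bounded linear or bilinear in the unknowns, hence holomorphic.

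The step requiring the most care is $\mathfrak T_\C$ inside $\cF_{\C,3}$, viewed as an $H^{-2}(0,L;\C)$-valued map. Here I would use that the complex lifting \eqref{eq:complex-lifting} is a bounded complex-linear map $\varphi\mapsto\cD_\varphi$ from $H^2_0(0,L;\C)$ into $H^1(\hOm;\C)^2$. This follows from \eqref{eq:lifting-estimate-H2} applied to real and imaginary parts, since the sine series is complex-linear. Replacing $\hv$ by $\cD_\varphi$ in the multilinear estimates above then gives uniform bounds of the form \eqref{eq:traction-Hminus-half-estimate}, with constants uniform over $V_{D,\C}$. Consequently $\mathfrak T_\C$ is again a composition of bounded multilinear maps with holomorphic coefficients. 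The one point to verify carefully is that no term uses a trace of $\hv$ on $\hG_{\rm top}$, so that $\cD_\varphi\notin H^1_0$ causes no difficulty. This holds because every term is a volume integral. Local boundedness together with complex Gâteaux differentiability, or directly the structure just described, then yields holomorphy of $\cF_\C$ near the origin.
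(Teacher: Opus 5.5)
Your proposal is correct and follows the same route as the paper. Holomorphy comes from composing the holomorphic coefficient maps $\eta\mapsto J_\eta, A_\eta$ with bounded multilinear forms, using $H^1(\hOm)\hookrightarrow L^4(\hOm)$ for the convective terms and the bounded lifting $\varphi\mapsto\cD_\varphi$ for the traction functional. You also make explicit some points the paper leaves implicit: the coefficients viewed as $L^\infty$ multipliers, the boundedness and complex-linearity of the complexified lifting, and the bound $\|\eta\|_{L^\infty}<1/2$ on the complex ball.
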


\begin{proof}
The coefficient maps in \eqref{eq:coeff-holomorphic} are holomorphic on $V_{D,\C}$. The remaining operations in \eqref{eq:FC1}--\eqref{eq:complex-traction} are continuous linear, bilinear, or trilinear maps between the displayed Banach spaces. For instance, the diffusion term is bounded because $A_\eta$ and $J_\eta$ are bounded multipliers, and
\[
  \left|\int_{\hOm}2D_\eta(\hu):D_\eta(\hv)J_\eta\dd\hx\right|
  \leq C(\eta)\|\hu\|_{H^1}\|\hv\|_{H^1}.
\]
For the convective term we use the two-dimensional embedding $H^1(\hOm)\hookrightarrow L^4(\hOm)$:
\[
  \left|\int_{\hOm}\bigl((A_\eta\hu)\cdot\nabla\bigr)\hu\cdot\hv J_\eta\dd\hx\right|
  \leq C(\eta)\|\hu\|_{H^1}^2\|\hv\|_{H^1}.
\]
The same estimate applies to the convective part of the traction functional, since $\cD_\varphi\in H^1(\hOm)^2$ and \eqref{eq:lifting-estimate} holds. Thus each component of $\cF_\C$ is obtained from holomorphic coefficient maps by continuous multilinear operations, and is therefore holomorphic.
\end{proof}

For real-valued arguments satisfying the fluid equations, the zero set of $\cF_\C$ restricted to the real spaces agrees with Definition~\ref{def:weak-solution}, by Remark~\ref{rmk:stress-trace}.

\section{The linearized operator}\label{sec:linearization}

At the trivial point $(\hf,\hu,\hp,\eta,c)=(0,0,0,0,0)$ one has $J_0=1$ and $A_0=I$. We compute the derivative of $\cF_\C$ with respect to the unknowns $X=(\hu,\hp,\eta,c)$. Let $(w,r,\psi,d)\in\cX_\C$.

We first note explicitly that the convective term does not contribute to the linearization at the origin. Indeed,
\[
  \mathcal N(\eta,\hu;\hv):=\int_{\hOm}\bigl((A_\eta\hu)\cdot\nabla\bigr)\hu\cdot\hv J_\eta\dd\hx
\]
is quadratic in $\hu$. Therefore every first variation of $\mathcal N$ at $(\eta,\hu)=(0,0)$ contains at least one factor equal to the base velocity $\hu=0$. The same argument applies to the convective term in the traction functional. Similarly, all derivatives of the coefficients with respect to $\eta$ in the diffusion, pressure, and forcing terms vanish at the origin when differentiating with respect to $X$, because they are multiplied by the base values $\hu=0$, $\hp=0$, and $\hf=0$.

Consequently,
\begin{equation}\label{eq:lin1}
\begin{aligned}
  \dual{D_X\cF_{\C,1}(0)(w,r,\psi,d)}{\hv}
  =
  \int_{\hOm}2D(w):D(\hv)\dd\hx
  -
  \int_{\hOm}r\,\diver\hv\dd\hx,
\end{aligned}
\end{equation}
\begin{equation}\label{eq:lin2}
  \dual{D_X\cF_{\C,2}(0)(w,r,\psi,d)}{\hq}
  =
  \int_{\hOm}(\diver w)\hq\dd\hx,
\end{equation}
\begin{equation}\label{eq:lin3}
\begin{aligned}
  \dual{D_X\cF_{\C,3}(0)(w,r,\psi,d)}{\varphi}
  =
  \int_0^L\psi''\varphi''\dd x
  +\mathfrak T_0(w,r)(\varphi)
  -d\int_0^L\varphi\dd x,
\end{aligned}
\end{equation}
and
\begin{equation}\label{eq:lin4}
  D_X\cF_{\C,4}(0)(w,r,\psi,d)=\int_0^L\psi\dd x.
\end{equation}
Here
\begin{equation}\label{eq:T0}
  \mathfrak T_0(w,r)(\varphi)
  :=
  \int_{\hOm}2D(w):D(\cD_\varphi)\dd\hx
  -
  \int_{\hOm}r\,\diver\cD_\varphi\dd\hx.
\end{equation}
The term $\mathfrak T_0(w,r)$ is the linearized normal Cauchy traction with respect to the outer fluid normal. It is essential in the coupled linearization.

\begin{proposition}\label{prop:real-invertibility}
The real operator
\[
  D_X\cF(0):\cX\to H^{-1}(\hOm)^2\times Q'\times H^{-2}(0,L)\times\R
\]
is a bounded linear isomorphism.
\end{proposition}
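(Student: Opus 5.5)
The operator has a block-triangular structure: the fluid unknowns $(w,r)$ do not see $(\psi,d)$ in \eqref{eq:lin1}--\eqref{eq:lin2}, and the beam equation \eqref{eq:lin3} sees $(w,r)$ only through the bounded term $\mathfrak T_0(w,r)$. I will therefore solve in two stages. Boundedness is immediate from the estimates already used in Proposition~\ref{prop:F-holomorphic} (with $\eta=0$) together with \eqref{eq:lifting-estimate-H2}. It then suffices to show that, for every $(g_1,g_2,g_3,g_4)\in H^{-1}(\hOm)^2\times Q'\times H^{-2}(0,L)\times\R$, there is a unique $(w,r,\psi,d)\in\cX$ with $D_X\cF(0)(w,r,\psi,d)=(g_1,g_2,g_3,g_4)$, depending continuously on the data. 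Alternatively, bijectivity plus the open mapping theorem gives the continuous inverse.

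\emph{Step 1 (fluid block).} Find $(w,r)\in H^1_0(\hOm)^2\times Q$ with
\[
\int_{\hOm}2D(w):D(\hv)-\int_{\hOm}r\diver\hv=\langle g_1,\hv\rangle,\qquad \int_{\hOm}\diver w\,\hq=\langle g_2,\hq\rangle
\]
for all test functions. This is the standard Stokes saddle-point problem, and I will apply Brezzi's theorem. Coercivity of $\int 2D(w):D(v)$ on $H^1_0$ follows from Korn's inequality; in fact $2\|D(w)\|^2=\|\nabla w\|^2+\|\diver w\|^2$ for $w\in H^1_0$, and Poincaré then finishes. The inf--sup condition comes from surjectivity of $\diver:H^1_0(\hOm)^2\to L^2_0(\hOm)$ on the Lipschitz rectangle (Bogovskii/Nečas). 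This yields unique $(w,r)$ with $\|w\|_{H^1}+\|r\|_{L^2}\le C(\|g_1\|_{H^{-1}}+\|g_2\|_{Q'})$.

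\emph{Step 2 (beam block with constraint).} With $(w,r)$ fixed, $\ell:=g_3-\mathfrak T_0(w,r)\in H^{-2}(0,L)$, and $\|\ell\|_{H^{-2}}\le \|g_3\|+C(\|w\|_{H^1}+\|r\|_{L^2})$ by Proposition~\ref{prop:traction-Hminus-half} (the $H^{-1/2}$ bound is stronger than needed). We must find $(\psi,d)\in V\times\R$ with
\[
\int_0^L\psi''\varphi''-d\int_0^L\varphi=\langle \ell,\varphi\rangle\ \ \forall\varphi\in V,\qquad \int_0^L\psi=g_4.
\]
This is again a saddle-point problem, with bilinear form $a(\psi,\varphi)=\int\psi''\varphi''$, coercive on all of $V=H^2_0$ by Poincaré applied twice using the clamped conditions, and constraint $b(\varphi,d)=d\int\varphi$. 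The inf--sup condition for $b$ holds by choosing a fixed $\varphi_0\in V$ with $\int\varphi_0=1$, for instance a bump function. Alternatively, set $\psi=\psi_\ell+d\,\psi_1$, where $a(\psi_\ell,\cdot)=\ell$ and $a(\psi_1,\cdot)=\int\cdot$, and solve for $d$ from $\int\psi_\ell+d\int\psi_1=g_4$. Here $\int\psi_1=a(\psi_1,\psi_1)>0$ since $\psi_1\neq0$. Either route gives unique $(\psi,d)$ and the required bounds.

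\emph{Uniqueness and conclusion.} Uniqueness follows from triangularity: zero data forces $(w,r)=0$ by Step 1, and then $(\psi,d)=0$ by Step 2. Combining the bounds gives a bounded inverse. I expect the only genuinely nontrivial ingredient to be the Stokes inf--sup condition on $\hOm$. Since that is classical for Lipschitz domains, I will cite it from \cite{galdi-book} rather than reprove it. The coupling term itself poses no difficulty, because it only enters as a bounded right-hand side for the beam.
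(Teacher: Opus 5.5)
Your proposal is correct and follows essentially the same route as the paper. The paper also solves the Stokes saddle-point problem first via Korn and inf--sup, treats $\mathfrak T_0(w,r)$ as a bounded load in $H^{-2}(0,L)$, and fixes the multiplier by superposing the Lax--Milgram solution with the clamped solution for unit load. The only cosmetic difference is that you obtain $\int_0^L\psi_1\,dx=a(\psi_1,\psi_1)>0$ abstractly, whereas the paper uses the explicit formula $x^2(L-x)^2/24$.
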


\begin{proof}
Let
\[
  (G,H,\Phi,\gamma)
  \in H^{-1}(\hOm)^2\times Q'\times H^{-2}(0,L)\times\R.
\]
We solve
\[
  D_X\cF(0)(w,r,\psi,d)=(G,H,\Phi,\gamma).
\]
The first two equations are the Stokes saddle-point problem
\begin{equation}\label{eq:lin-stokes}
\begin{aligned}
  \int_{\hOm}2D(w):D(v)\dd\hx
  -\int_{\hOm}r\,\diver v\dd\hx
  &=\dual{G}{v},
  && v\in H^1_0(\hOm)^2,\\
  \int_{\hOm}(\diver w)q\dd\hx
  &=\dual{H}{q},
  && q\in Q.
\end{aligned}
\end{equation}
By Korn's inequality and the standard Stokes inf-sup theorem on bounded Lipschitz domains, \eqref{eq:lin-stokes} has a unique solution
\[
  (w,r)\in H^1_0(\hOm)^2\times Q
\]
and
\begin{equation}\label{eq:stokes-estimate}
  \|w\|_{H^1(\hOm)}+
  \|r\|_{L^2(\hOm)}
  \leq
  C\bigl(\|G\|_{H^{-1}(\hOm)}+\|H\|_{Q'}\bigr).
\end{equation}

With $(w,r)$ fixed, the beam part becomes
\begin{equation}\label{eq:beam-linear}
  \int_0^L\psi''\varphi''\dd x
  -d\int_0^L\varphi\dd x
  =
  \dual{\Phi}{\varphi}-\mathfrak T_0(w,r)(\varphi),
  \qquad \varphi\in V,
\end{equation}
and
\begin{equation}\label{eq:beam-constraint-linear}
  \int_0^L\psi\dd x=\gamma.
\end{equation}
The functional
\[
  \varphi\mapsto\dual{\Phi}{\varphi}-\mathfrak T_0(w,r)(\varphi)
\]
belongs to $V'=H^{-2}(0,L)$. Hence, by the Lax--Milgram theorem, there is a unique $\psi_1\in V$ such that
\[
  \int_0^L\psi_1''\varphi''\dd x
  =
  \dual{\Phi}{\varphi}-\mathfrak T_0(w,r)(\varphi),
  \qquad \varphi\in V.
\]
Let $\psi_0\in V$ be the unique solution of
\[
  \int_0^L\psi_0''\varphi''\dd x
  =
  \int_0^L\varphi\dd x,
  \qquad \varphi\in V.
\]
Equivalently, $\psi_0''''=1$ with clamped boundary conditions, so
\[
  \psi_0(x)=\frac{x^2(L-x)^2}{24},
\]
and therefore
\[
  \int_0^L\psi_0\dd x>0.
\]
Define
\[
  d:=\frac{\gamma-\int_0^L\psi_1\dd x}{\int_0^L\psi_0\dd x},
  \qquad
  \psi:=\psi_1+d\psi_0.
\]
Then $(\psi,d)$ is the unique solution of \eqref{eq:beam-linear}--\eqref{eq:beam-constraint-linear}. The construction and the estimates above give boundedness of the inverse operator. Thus $D_X\cF(0)$ is a bounded linear isomorphism.
\end{proof}

\begin{corollary}\label{cor:complex-invertibility}
The complex-linear operator
\[
  D_X\cF_\C(0):\cX_\C\to\cY_\C
\]
is a bounded linear isomorphism.
\end{corollary}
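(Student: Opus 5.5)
The plan is to deduce the statement from Proposition~\ref{prop:real-invertibility} and Lemma~\ref{lem:complexification-isomorphism}. We need to identify $\cX_\C$ and $\cY_\C$ with the complexifications of $\cX$ and $\cY:=H^{-1}(\hOm)^2\times Q'\times H^{-2}(0,L)\times\R$, and $D_X\cF_\C(0)$ with the complexification $(D_X\cF(0))_\C$.

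For the spaces, every complex-valued function splits uniquely as $w=w_1+iw_2$ with $w_1,w_2$ real. Membership in $H^1_0$, $L^2_0$ or $H^2_0$ is preserved by this splitting, and the norms are equivalent. The zero-mean condition passes to real and imaginary parts because the integral is complex-linear. Hence $\cX_\C=\cX+i\cX$ with equivalent norms.

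For the dual spaces we use complex-bilinear pairings, as declared before the definition of $\cF_\C$. A complex-linear functional $\ell$ on $H^1_0(\hOm;\C)^2$ is determined by its restriction to real test functions. That restriction is a functional with values in $\C$, and it splits as $\ell_1+i\ell_2$ with $\ell_j\in H^{-1}(\hOm)^2$ real, namely $\ell_1=\operatorname{Re}\ell$ and $\ell_2=\operatorname{Im}\ell$ on real tests. So $H^{-1}(\hOm;\C)^2=H^{-1}(\hOm)^2+iH^{-1}(\hOm)^2$. The same argument works for $(L^2_0)'$ and $H^{-2}$, and the last component is just $\C=\R+i\R$.

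Next we check that the linearized operator is the complexification. The formulas \eqref{eq:lin1}--\eqref{eq:lin4} and \eqref{eq:T0} have real coefficients and are complex-bilinear in the unknown and the test function. In particular, the lifting $\cD_\varphi$ in \eqref{eq:complex-lifting} is the complex-linear extension of the real lifting. Hence for $X=X_1+iX_2$ and real test functions we get
\[
D_X\cF_\C(0)(X_1+iX_2)=D_X\cF(0)X_1+i\,D_X\cF(0)X_2 .
\]
By complex-linearity in the test function, this identity holds for all test functions. A further point is that the derivative of the complexified map at the real point $0$ coincides with the complexified real derivative. This follows because the computation preceding \eqref{eq:lin1} is purely algebraic, with real coefficients $J_0=1$ and $A_0=I$.

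Lemma~\ref{lem:complexification-isomorphism} then gives that $D_X\cF_\C(0)$ is a bounded complex-linear isomorphism. The only step needing care is the identification of the complex dual spaces under bilinear pairings, together with norm equivalence. The norm equivalence is immediate from $\max(\|\ell_1\|,\|\ell_2\|)\le\|\ell\|\le\|\ell_1\|+\|\ell_2\|$.
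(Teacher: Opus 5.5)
Your proposal is correct and follows the paper's own argument, which simply invokes Proposition~\ref{prop:real-invertibility} together with Lemma~\ref{lem:complexification-isomorphism}. What you add is the bookkeeping the paper leaves implicit: identifying $\cX_\C$ and $\cY_\C$ (with complex-bilinear dualities) as complexifications of the real spaces, and identifying $D_X\cF_\C(0)$ with the complexification of the real linearization via its real-coefficient formulas.
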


\begin{proof}
This follows from Proposition~\ref{prop:real-invertibility} and Lemma~\ref{lem:complexification-isomorphism}.
\end{proof}

\section{Proof of the main theorem}\label{sec:proof-main}

We now prove Theorem~\ref{thm:main}. By Proposition~\ref{prop:F-holomorphic}, the map $\cF_\C$ is holomorphic near the origin. Moreover,
\[
  \cF_\C(0,0,0,0,0)=0,
\]
and, by Corollary~\ref{cor:complex-invertibility}, the derivative with respect to the unknowns at the origin is an isomorphism. The holomorphic implicit function theorem, Theorem~\ref{thm:HIFT}, gives neighbourhoods
\[
  \mathcal V_\C\subset\cP_\C,
  \qquad
  \mathcal U_\C\subset\cX_\C,
\]
of the origin and a unique holomorphic map
\[
  \mathcal S_\C:\mathcal V_\C\to\mathcal U_\C
\]
such that
\[
  \cF_\C(\hf,\mathcal S_\C(\hf))=0
  \qquad\text{for all }\hf\in\mathcal V_\C.
\]
Write
\[
  \mathcal S_\C(\hf)=(\hu(\hf),\hp(\hf),\eta(\hf),c(\hf)).
\]
After shrinking the implicit-function neighbourhoods, if necessary, we may take them invariant under complex conjugation. For real-valued $\hf$, the conjugate $\overline{\mathcal S_\C(\hf)}$ is then another zero of the same real-coefficient equation in the uniqueness neighbourhood. Local uniqueness therefore gives $\overline{\mathcal S_\C(\hf)}=\mathcal S_\C(\hf)$, so restriction to real-valued data defines a real analytic map into the real state space $\cX$.

For real-valued arguments, zeros of $\cF_\C$ are precisely weak solutions in the sense of Definition~\ref{def:weak-solution}. The identification of the traction term is justified by Remark~\ref{rmk:stress-trace}. Therefore, after possibly shrinking the neighbourhoods and choosing $\rho>0$ such that
\[
  \{\hf\in L^2(\hOm)^2:\|\hf\|_{L^2}<\rho\}\subset \mathcal V_\C\cap L^2(\hOm)^2,
\]
we obtain the asserted local weak solution and its real analytic dependence on $\hf$.

The uniqueness assertion is local: if $\hf$ belongs to this ball and if a weak solution belongs to the neighbourhood $\mathcal U:=\mathcal U_\C\cap\cX$, then it coincides with $\mathcal S_\C(\hf)$. This is the uniqueness statement supplied by the implicit function theorem. Theorem~\ref{thm:main} follows.

\begin{corollary}\label{cor:zero-unique}
There exists a neighbourhood $\mathcal U_0$ of the origin in $\cX$ such that the zero-forcing problem has only the trivial solution in $\mathcal U_0$.
\end{corollary}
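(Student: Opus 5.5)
The plan is to read Corollary~\ref{cor:zero-unique} directly off the uniqueness part of the holomorphic implicit function theorem, as applied in the proof of Theorem~\ref{thm:main}.

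Theorem~\ref{thm:HIFT} yields neighbourhoods $\mathcal V_\C\ni0$ and $\mathcal U_\C\ni0$. The only zeros of $\cF_\C$ in $\mathcal V_\C\times\mathcal U_\C$ are the points $(\hf,\mathcal S_\C(\hf))$. Take the parameter $\hf=0$, which lies in $\mathcal V_\C$. The trivial state $(0,0,0,0)$ solves $\cF_\C(0,\cdot)=0$: each of \eqref{eq:FC1}--\eqref{eq:FC4} vanishes when $\hu=0$, $\hp=0$, $\eta=0$, $c=0$, $\hf=0$, because $J_0=1$, $A_0=I$, and every remaining term contains one of these vanishing factors. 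The trivial state also lies in $\mathcal U_\C$. Uniqueness therefore forces $\mathcal S_\C(0)=(0,0,0,0)$.

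Set $\mathcal U_0:=\mathcal U_\C\cap\cX$, which is the set $\mathcal U$ of Theorem~\ref{thm:main}. If necessary, intersect it with the set where $\eta\in V_D$, so that the weak formulation is meaningful and $1+\eta\geq1/2$. Now let $(\hu,\hp,\eta,c)\in\mathcal U_0$ be a weak solution with zero forcing in the sense of Definition~\ref{def:weak-solution}. Comparing \eqref{eq:weak-fluid}--\eqref{eq:weak-volume} with \eqref{eq:FC1}--\eqref{eq:FC4} on real arguments shows that $\cF_\C(0,\hu,\hp,\eta,c)=0$. The sign convention for $\mathfrak T$ in \eqref{eq:weak-beam} matches \eqref{eq:FC3}. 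The only point needing care is that a real test function pairing determines the complex-bilinear functional. This holds because a real element of $\cY_\C$ that annihilates all real test functions also annihilates their complex combinations, by complex linearity. By the uniqueness statement, $(\hu,\hp,\eta,c)=\mathcal S_\C(0)=0$.

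I expect no genuine obstacle. The step that needs the most attention is the bookkeeping of neighbourhoods: $\mathcal U_0$ must be open in $\cX$, which holds since the inclusion $\cX\hookrightarrow\cX_\C$ is continuous. It must also lie inside the domain where the coefficients are defined, which is handled by intersecting with $V_D$ as above.
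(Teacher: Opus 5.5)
Your proposal is correct and follows the paper's argument. The paper proves the corollary as the $\hf=0$ case of the local uniqueness given by the holomorphic implicit function theorem, using $\mathcal S_\C(0)=(0,0,0,0)$ and $\mathcal U_0=\mathcal U_\C\cap\cX$; your extra checks (the trivial state is a zero, real weak solutions are zeros of $\cF_\C$ by complex-linearity in the test functions, and $\mathcal U_0$ is open) only spell out what the paper's one-line proof leaves implicit.
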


\begin{proof}
This is the case $\hf=0$ of the local uniqueness statement in Theorem~\ref{thm:main}. Equivalently, it follows from the holomorphic implicit function theorem applied above, since $\mathcal S_\C(0)=(0,0,0,0)$.
\end{proof}

\section{Numerical experiments}\label{sec:numerics}

The Kolmogorov $n$-width of a set $S\subset X$ is
\begin{equation}
  d_n(S;X):=\inf_{\substack{V_n\subset X\\ \dim V_n=n}}
  \sup_{u\in S}\inf_{v\in V_n}\|u-v\|_X.
\end{equation}
It measures the best worst-case error achievable by an $n$-dimensional linear space. The computations below do not evaluate this quantity directly: POD on a finite training set minimizes a mean-square projection error, and the reported test error is an empirical proxy in componentwise $L^2$ norms. Moreover, the analyticity theorem by itself does not imply exponential decay of $d_n$ for the infinite-dimensional force space; quantitative width estimates require additional finite-dimensional or sparsity assumptions. The purpose of the experiment is therefore to examine the observed linear compressibility of one finite-dimensional force family.

\subsection{Method and validation}

We solve the problem by the Picard iterative scheme until the maximal error between two iterations in all solution components is less than the given tolerance. The iteration is initialized at the trivial state $\hu^{(0)}=0$, $\eta^{(0)}=0$.
At iteration $k\ge 0$, given the previous iterate $(\hu^{(k)},\eta^{(k)})$, the next
iterate $(\hu^{(k+1)},\hp^{(k+1)},\eta^{(k+1)})$ is computed in three steps.

\vspace{0.5em}
\noindent\textbf{Step 1 -- the fluid problem:} Find
$(\hu^{(k+1)},\hp^{(k+1)}) \in H_0^1(\hat{\Omega})^2 \times L^2_0(\hat{\Omega})$
such that for all
\[ (\hat{v}, \hat{q}) \in H_0^1(\hat{\Omega})^2 \times L^2_0(\hat{\Omega}) \]
\begin{equation}\label{eq:weak-fluid-numeric}
\begin{aligned}
  &\int_{\hOm}2D_{\eta^{(k)}}(\hu^{(k+1)}):D_{\eta^{(k)}}(\hv)\,J_{\eta^{(k)}}\dd\hx
  +\int_{\hOm}\bigl((A_{\eta^{(k)}}\hu^{(k)})\cdot\nabla\bigr)\hu^{(k+1)}\cdot\hv\,J_{\eta^{(k)}}\dd\hx
  \\
  &\qquad
  -\int_{\hOm}\hp^{(k+1)}\,\diver_{\eta^{(k)}}\hv\,J_{\eta^{(k)}}\dd\hx
  =
  \int_{\hOm}\hf\cdot\hv\,J_{\eta^{(k)}}\dd\hx,
\end{aligned}
\end{equation}
\begin{equation}\label{eq:weak-div-numeric}
  \int_{\hOm}\diver_{\eta^{(k)}}\hu^{(k+1)}\,\hq\,J_{\eta^{(k)}}\dd\hx=0 .
\end{equation}

\vspace{0.5em}
\noindent\textbf{Step 2 -- beam problem:} To use $P_2$ elements, the beam problem is
reformulated as a first-order system. Find $(\eta_1,\psi_1)\in H_0^1(0,L)^2$ such that
for every $(\varphi,\rho)\in H_0^1(0,L)^2$
\begin{equation}\label{eq:weak-beam-numeric}
  \int_0^L\psi_1'\,\rho'\dd x
  =
  -\mathfrak T\bigl(\eta^{(k)},\hf,\hu^{(k+1)},\hp^{(k+1)}\bigr)(\varphi),
\end{equation}
\begin{equation}\label{eq:weak-beam-penalty}
  \alpha \int_0^L(\eta_1'-\psi_1)(\varphi'-\rho)\dd x = 0,
\end{equation}
with $\alpha=10^5$.

\vspace{0.5em}
\noindent\textbf{Step 3 -- volume constraint:} Determine the constant $c$ from
\begin{align*}
    c &= -\frac{\int_0^L \eta_1(x)\dd x}{\int_0^L \eta_0(x)\dd x}
       = -\frac{720}{L^5} \int_0^L \eta_1(x)\dd x,
\end{align*}
where $\eta_0(x)=\dfrac{x^2(L-x)^2}{24}$, and set
$\eta^{(k+1)}=\eta_1+c\,\eta_0$.
The iteration is stopped when the increment of every unknown is below the prescribed
tolerance.

The fluid subproblem is discretized with Taylor--Hood $P2$--$P1$ elements. In the fluid subproblem at iteration $k$, the convective term is linearized as
$(\hu^{(k)}\cdot\nabla)\hu^{(k+1)}$ and the geometric coefficients
$J_{\eta^{(k)}}$, $A_{\eta^{(k)}}$ are evaluated at the displacement $\eta^{(k)}$
from the previous iterate.  After solving the fluid problem, the beam problem is updated using the new traction. The fourth-order clamped beam equation is written as an equivalent mixed system of second-order equations and discretized with quadratic elements.

The implementation was first checked with a manufactured solution. On the reference domain, let $h=1+\eta$ and set
\begin{align*}
  \eta(x)
  &=\frac{1}{24}x^2(1-x)^2(2x-1),\\
  \hu_1(x,y)
  &=(\eta(x) + 1)^4x^2(1-x)^2y(1-y)^2(2-5y),\\
  \hu_2(x,y)
  &=-(\eta(x) + 1)^5y^2(1-y)^2\left[
      2x(1-x)(1-2x)(1-y)
      +\frac{3\eta'(x)}{h}x^2(1-x)^2
    \right],\\
  \hp(x,y)
  &=10\left(x-\frac12\right).
\end{align*}
A direct calculation gives $\diver_\eta\hu=0$. Moreover, $D_\eta(\hu)=0$ on the top boundary and $\eta^{(4)}=-\hp$, so the beam equation holds with $c=0$. The reference force is then obtained by substituting these exact fields into the transformed momentum equation. The resulting relative $L^2$ errors and observed refinement rates are reported in Table~\ref{tab:mms_convergence}.
\begin{table}[htbp]
  \centering
    \begin{tabular}{|l |cc |cc |c|}
        \hline
        & \multicolumn{2}{c}{$n=32$} & \multicolumn{2}{c}{$n=64$} & $n=128$ \\
        \hline
        Field & error & rate & error & rate & error \\
        \hline
        $u$    & $1.4420\mathrm{e}{-5}$ & 2.59 & $2.4018\mathrm{e}{-6}$ & 2.10 & $5.6044\mathrm{e}{-7}$ \\
        \hline
        $v$    & $1.4824\mathrm{e}{-5}$ & 2.44 & $2.7340\mathrm{e}{-6}$ & 2.06 & $6.5646\mathrm{e}{-7}$ \\
        \hline
        $p$    & $3.1750\mathrm{e}{-6}$ & 2.03 & $7.7549\mathrm{e}{-7}$ & 2.03 & $1.8953\mathrm{e}{-7}$ \\
        \hline
        $\eta$ & $1.1231\mathrm{e}{-2}$ & 2.01 & $2.7977\mathrm{e}{-3}$ & 2.03 & $6.8448\mathrm{e}{-4}$ \\
        \hline
    \end{tabular}
     \caption{Relative $L^2$ errors and observed convergence rates under uniform
  mesh refinement for the manufactured solution. The rate is calculated as $\log_2$ of the ratio of relative errors on the coarse and fine meshes. The observed rates are approximately second order.}
  \label{tab:mms_convergence}
\end{table}

We next consider a symmetric pair of counter-rotating localized forces in a closed elastic channel. Set
\[
  E_\pm(x,y):=
  \exp\!\left[-\frac{(x-(1/2\pm\mu))^2+(y-y_0)^2}{s^2}\right].
\]
The force family is
\begin{align}
  \hf_1(x,y)
  &=-A(y-y_0)\bigl(E_-(x,y)-E_+(x,y)\bigr),\\
  \hf_2(x,y)
  &=A\Bigl((x-(1/2-\mu))E_-(x,y)
    -(x-(1/2+\mu))E_+(x,y)\Bigr).
    \label{eq:forces}
\end{align}
The four parameters range over
\[
  y_0\in[0.225,2/3],\qquad
  \mu\in[0,0.5],\qquad
  A\in[5000,25000],\qquad
  s\in[0.25,0.35],
\]
where $x$ coordinate of the force centers is $\frac{1}{2}  \pm \mu $, $y_0$ corresponds to $y$ coordinates of the force centers, $A$ is amplitude and $s$ determines width of the force centers. 
Examples of force streamlines are shown in Figure \ref{fig_streamlines} with their corresponding solutions in Figure \ref{fig:solution}.
\begin{figure}
    \centering
    \includegraphics[width=0.45\linewidth]{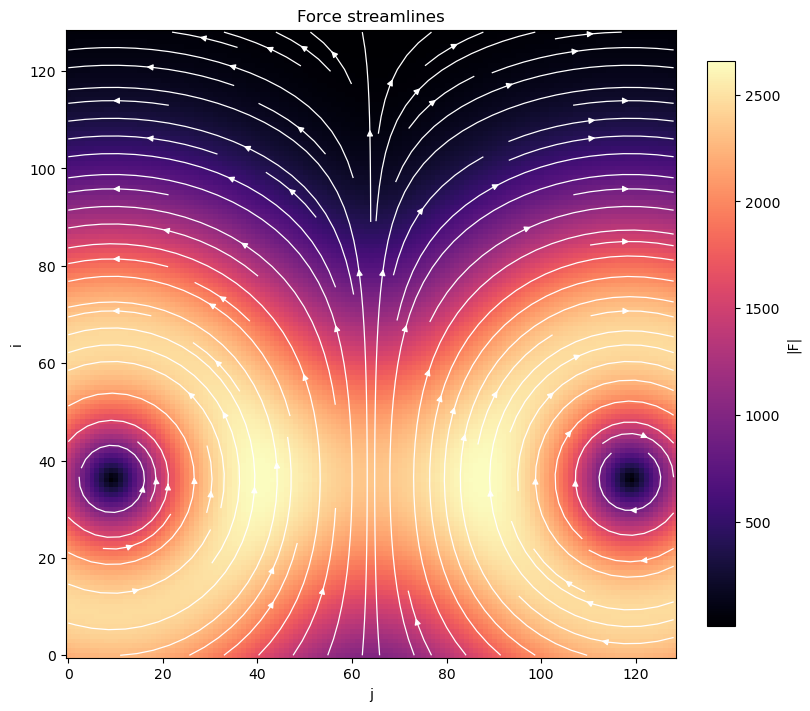}
    \includegraphics[width=0.45\linewidth]{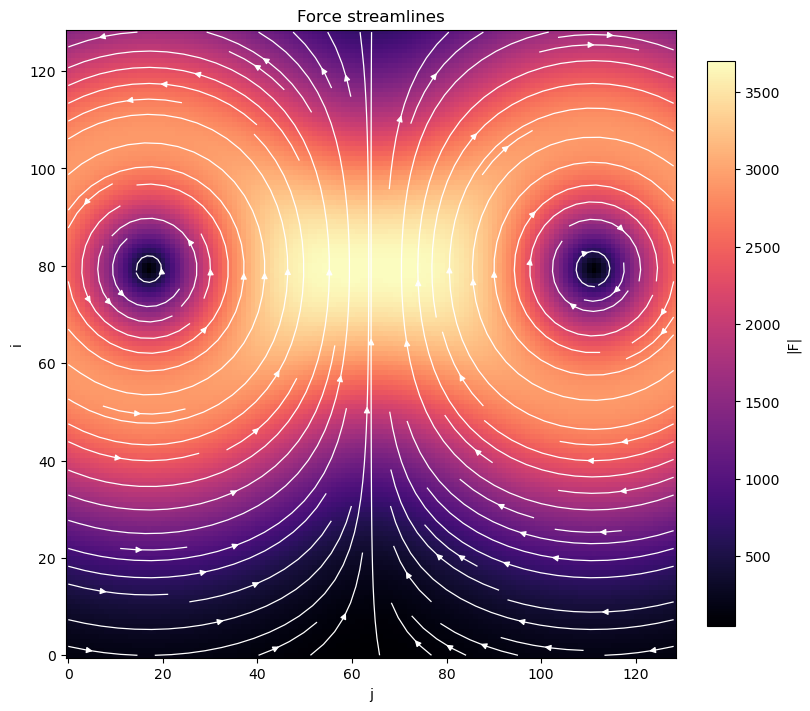}
    \caption{Examples of force streamlines with parameters $(y_0, \mu, A, s)$ equal to $(0.28, 0.43, 18600, 0.31)$ (left panel) and $(0.62, 0.37, 22950, 0.30)$ (right panel). }
    \label{fig_streamlines}
\end{figure}
\begin{figure}
    \centering
    \includegraphics[width=0.95\linewidth]{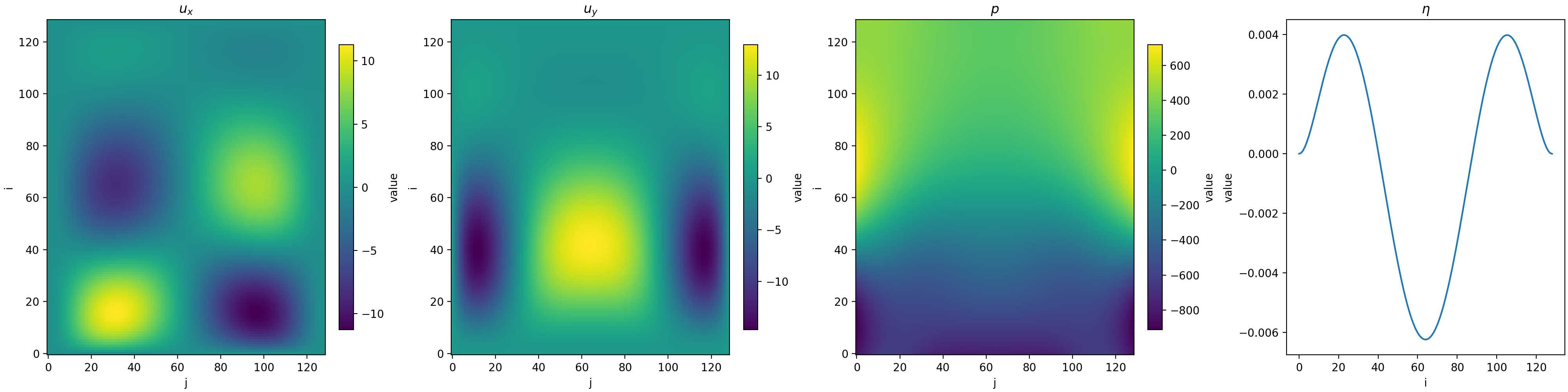}
    \includegraphics[width=0.95\linewidth]{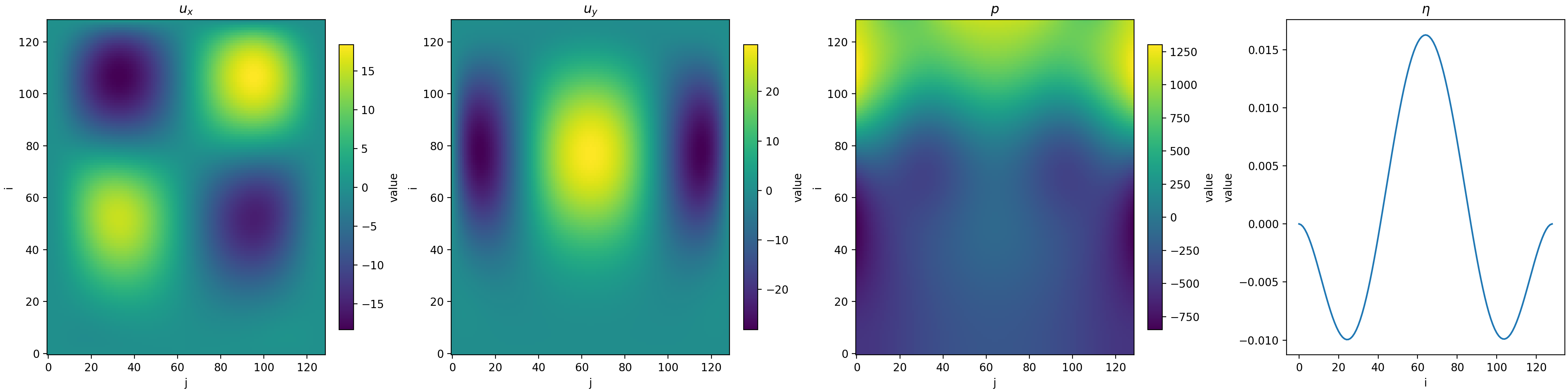}
    \caption{Solution for parameters $(y_0, \mu, A, s)$ equal to $(0.28, 0.43, 18600, 0.31)$ (first row) and $(0.62, 0.37, 22950, 0.30)$ (second row).}
    \label{fig:solution}
\end{figure}

The fluid mesh uses a $128\times128$ grid, and the structure mesh has $128$ points. 
The problem is solved with already described iterative scheme with tolerance $10^{-9}$. Across the computed data set, the maximum absolute boundary displacement is $6.8\%$ of the channel height and the average number of coupled Picard iterations is $15$.

We further assess mesh convergence for the force with $(y_0,\mu,A,s)=(0.66,0.3,15000,0.3)$, using the solution on a $512\times512$ fluid grid and a $512$-point structure mesh as reference. The results are shown in Table~\ref{tab:convergence}.

\begin{table}[htbp]
  \centering
\begin{tabular}{|l |cc |cc |c|}
    \hline
    & \multicolumn{2}{c}{$n=32$} & \multicolumn{2}{c}{$n=64$} & $n=128$ \\
    \hline
    Field & error & rate & error & rate & error \\
    \hline
    $u$    & $3.5300\mathrm{e}{-3}$ & 2.19 & $7.7561\mathrm{e}{-4}$ & 2.86 & $1.0679\mathrm{e}{-4}$ \\
    \hline
    $v$    & $3.5970\mathrm{e}{-3}$ & 2.24 & $7.6151\mathrm{e}{-4}$ & 2.87 & $1.04527\mathrm{e}{-4}$ \\    \hline
    $p$    & $5.5900\mathrm{e}{-3}$ & 2.20 & $1.2167\mathrm{e}{-3}$ & 2.62 & $1.9812\mathrm{e}{-4}$ \\
    \hline
    $\eta$ & $5.6996\mathrm{e}{-2}$ & 2.21 & $1.2331\mathrm{e}{-2}$ & 3.13 & $1.4123\mathrm{e}{-3}$ \\
    \hline
\end{tabular}
  \caption{Relative $L^2$ errors and observed convergence rates under uniform
  mesh refinement, using the $n=512$ solution as reference. Rates are computed
  between successive meshes. The $n=256$ level is omitted: being only one
  refinement below the reference, its error is contaminated by the reference
  solution's own discretization and linear-solver error and does not reflect
  the true discretization error. The reported rates are at least second order over these refinements.}
  \label{tab:convergence}
\end{table}
The manufactured-solution test (Table \ref{tab:mms_convergence}) exhibits approximately second-order convergence in the relative $L^2$ errors, validating the implementation. The self-referential refinement study (Table \ref{tab:convergence}) is consistent with this but, using the finest mesh as reference, is not a clean asymptotic-rate measurement.

\subsection{Reduced-order approximability}
We now examine how well the solution manifold generated by the four-parameter force family is approximated by low-dimensional linear subspaces. Rather than the worst-case Kolmogorov width, we measure the empirical mean-square reconstruction error of a POD basis on held-out data, which is the quantity relevant to data-driven reduced-order models. The aim is to gauge the linear compressibility of the manifold for this particular family, not to certify a width rate.

We use the method of snapshots with proper orthogonal decomposition (POD) \cite{c01a2bc6-f9f6-36a9-835f-6f8e5dd97c30}. For each component, the centered training snapshots are vectorized and assembled as the columns of a matrix $A$. If $A=U\Sigma V^T$ is its singular value decomposition, the first $k$ columns of $U$ span the $k$-dimensional space minimizing the mean squared projection error over this finite training set \cite{quarteroni2015reduced}. This is not the worst-case optimality appearing in the definition of the Kolmogorov width. 

For a finite-dimensional force family $\hf = (\hf_1, \hf_2)$ as in \eqref{eq:forces}, depending analytically on 
$(y_0,\mu,A,s)$ the composition with the solution map is holomorphic on the parameter domain wherever Theorem \ref{thm:main} applies. On any compact parameter set admitting an explicit polyellipse of holomorphy, the standard finite-dimensional holomorphy argument \cite{cohenDeVore2015} then yields exponential $n$-width decay. We verify neither the required domain-of-holomorphy bounds nor the smallness of Theorem \ref{thm:main}, so the decay in Figure \ref{podDecay} is reported as empirical.

POD modes are computed from a training set of $200$ solutions, and the mean relative $L^2$ reconstruction error is evaluated on a test set of $100$ solutions. Modes are computed separately for each solution component after subtracting its training-set mean. Figure~\ref{podDecay} shows the test error as a function of the number of modes.

\begin{figure}
    \centering
    \includegraphics[width=0.75\linewidth]{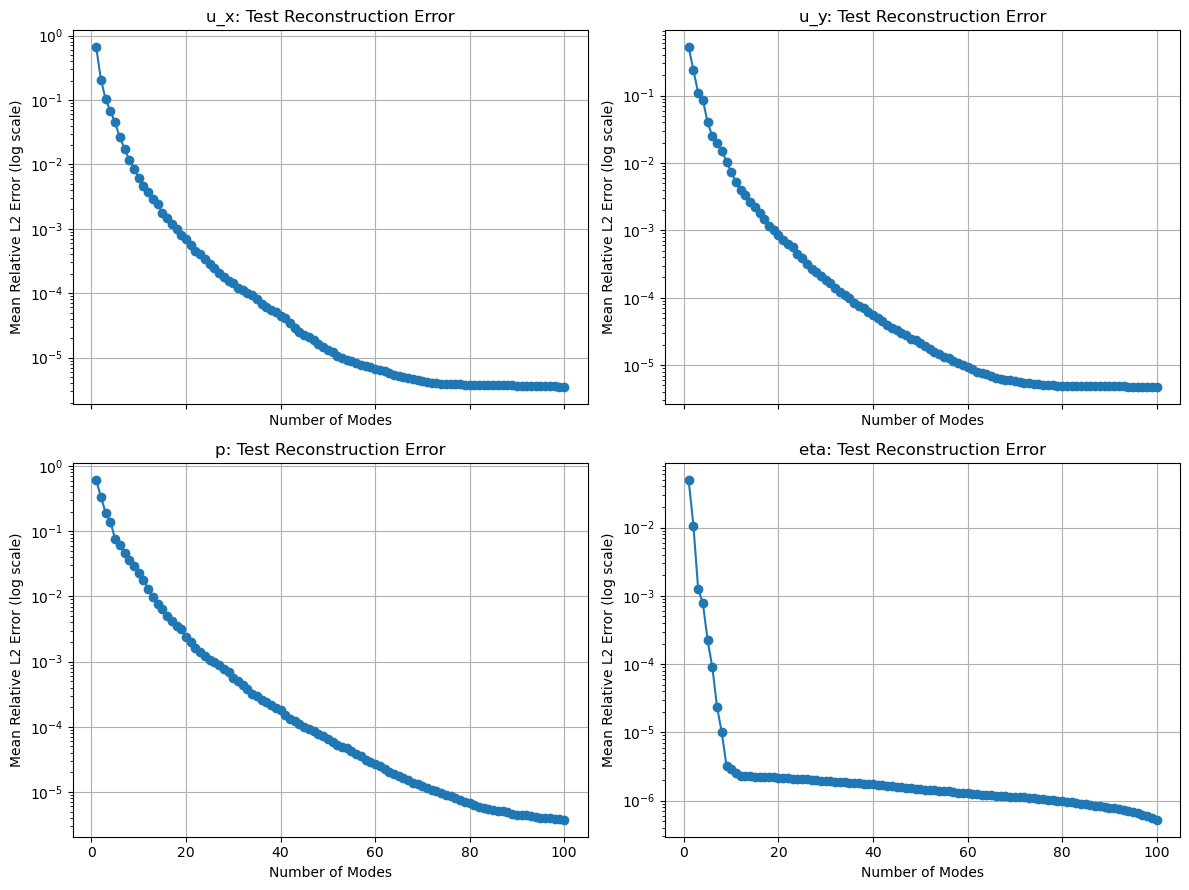}
    \caption{Mean relative $L^2$ POD reconstruction error on the test set in log scale for the two velocity components (top row), pressure, and $\eta$ (bottom row). The errors decrease rapidly and then reach floors between approximately $10^{-5}$ and $10^{-6}$. These floors may reflect a combination of spatial discretization, nonlinear-solver tolerance, finite-sample effects, and numerical linear algebra. The observed decay is empirical; the analyticity theorem alone does not furnish an exponential Kolmogorov-width rate for this force family.}
    \label{podDecay}
\end{figure}

\section{Conclusion}

We proved that the weak solution of the stationary Navier--Stokes--beam problem depends real analytically on the reference-domain volume force in a neighbourhood of the trivial state. As a consequence, the analysis also provides local existence and uniqueness for sufficiently small data.

The numerical scheme was implemented and verified by means of a manufactured-solution test. For the considered parametric family of localized forces, the POD approximation errors exhibit an approximately exponential decay over the resolved range before reaching a numerical floor. This provides empirical evidence of a rapidly decaying Kolmogorov approximation rate for the sampled solution manifold. Such behaviour is consistent with the expected reduced-order approximability of parametrically analytic solution maps, and the analyticity result established in this work provides a theoretical foundation in this direction.

\section*{Acknowledgements}
This research was supported by the Croatian Science Foundation under project IP-2022-10-2962. Boris Muha was supported by the European Union -- NextGenerationEU through the National Recovery and Resilience Plan 2021--2026 and by the institutional grant of the University of Zagreb Faculty of Science IK IA 1.1.3 Impact4Math.

During the preparation of this work, the authors used OpenAI's ChatGPT for language editing, assistance with drafting and revising parts of the manuscript, and literature searches. All AI-assisted content, including references identified with its assistance, was independently reviewed and verified by the authors. The authors take full responsibility for the content of the manuscript.

\bibliographystyle{alpha}
\bibliography{references_regularized_reviewed}

\end{document}